\renewcommand{\k}{\mathbb{k}}
\theoremstyle{definition}
\newtheorem{theorem}{Theorem}[section]
\newtheorem{theoremx}{Theorem}
\numberwithin{equation}{section}
\newtheorem{corollary}[theorem]{Corollary}
\newtheorem{lemma}[theorem]{Lemma}
\newtheorem{proposition}[theorem]{Proposition}
\newtheorem{notation}[theorem]{Notation}
\theoremstyle{definition}
\newtheorem{definition}[theorem]{Definition}
\newtheorem{example}[theorem]{Example}
\newtheorem{conjecturex}[theoremx]{Conjecture}
\newtheorem{remark}[theorem]{Remark}
\newtheoremstyle{TheoremNum}
{8pt}{8pt}              
{\upshape}                      
{}                              
{\bfseries}                     
{.}                             
{.5em}                             
{\theoremname{#1}\theoremnote{ \bfseries #3}}
\theoremstyle{TheoremNum}
\newcommand{\m}{\mathfrak{m}}
\renewcommand{\)}{\right)}
\newcommand{\pp}{\partial}
\newcommand{\cC}{\mathfrak{C}}
\newcommand{\ord}{\operatorname{ord}}
\newcommand{\edim}{\operatorname{edim}}
\newcommand{\Rank}{\operatorname{rank}}
\newcommand{\C}{\mathfrak{C}}
\newcommand{\ms}{\operatorname{Mono}}
\renewcommand{\leq}{\leqslant}
\renewcommand{\geq}{\geqslant}
\newcommand{\ds}{\displaystyle}
\newcommand{\ps}[1]{\llbracket {#1} \rrbracket}
\renewcommand{\a}{\mathfrak{a}}
\newcommand\scalemath[2]{\scalebox{#1}{\mbox{\ensuremath{\displaystyle #2}}}}
\title{Valuations and nonzero torsion in module of differentials}
\address{Department of Mathematics, University of Utah, Salt Lake City, UT, USA}
\author[Maitra]{Sarasij Maitra}
\email{maitra@math.utah.edu}
\address{Department of Mathematics, Indian Institute of Technology Delhi, Hauz Khas, India.}
\author[Mukundan]{Vivek Mukundan}
\email{vmukunda@iitd.ac.in}
\subjclass[2010]{Primary: 13A15. Secondary: 13H05}
\keywords{module of differentials, Berger Conjecture, reduced curves}
\begin{document}

	\begin{abstract}
Let $(R,\m_R,\k)$ be a one-dimensional complete local reduced $\k$-algebra over a field of characteristic zero. Berger conjectured that $R$ is regular if and only if the universally finite module of differentials $\Omega_R$ is torsion free. When $R$ is a domain, we prove the conjecture in several cases. Our techniques are primarily reliant on making use of the valuation semi-group of $R$. First, we establish a method of verifying the conjecture by analyzing the valuation semi-group of $R$ and orders of units of the integral closure of $R$. We also prove the conjecture in the case when certain monomials are missing from the monomial support of the defining ideal of $R$. These monomials are based on the smallest power of $\m_R$ that is contained within the conductor ideal. This also generalizes a previous result of \cite{ABC1}. 
	\end{abstract}
	\maketitle
\section{Introduction}
Let $X$ be a (affine or projective) curve over a field $\k$ of characteristic zero. The classification of circumstances when $X$ is smooth is of importance to both algebraists and geometers. Though there are other smoothness characterizations, the one involving the differential sheaf $\Omega_{X,x}$, where $x$ is a closed point on $X$, has eluded researchers. Consider the completion of the stalk of the structure sheaf $\mathcal{O}_X$ at $x$ and denote this by $\widehat{\mathcal{O}_{X,x}}$. R.W. Berger conjectured that $X$ is smooth at $x$ if and only if the sheaf of differentials $\widehat{\mathcal{O}_{X,x}}\otimes \Omega_{X,x}$ is torsion free. 

Let $R=\widehat{\mathcal{O}_{X,x}}\cong\k\ps{X_1,\dots,X_n}/I$ with $I\subseteq (X_1,\dots, X_n)^2$. For a curve, the differential sheaf mentioned above is $R\otimes \Omega_{X,x}$ where $\Omega_{X,x}$ is the module of K\"ahler differentials and $R\otimes \Omega_{X,x}$ is the universally finite module of differentials, henceforth denoted by $\Omega_R$. 
For the purposes of this article, we can define the universally finite module of differentials $\Omega_R$ to be the module whose presentation is as follows:
$$R^{\mu(I)}\stackrel{\mathcal{J}}{\to} R^n\to \Omega_{ R}\to 0$$ where $\mu(I)$ is the minimal set of generators of $I$ and $\mathcal{J}$ is the Jacobian matrix on a generating set for $I$, considered as a matrix over $R$. For further details, we refer the reader to the excellent resource \cite{Kunzbook}.
One of the first pieces of evidence on the smoothness of the curve $X$ via the torsion freeness of its differential sheaf was studied by E. Kunz (\cite{Kunz61}) when the ring is of characteristic $p$ and the base field is imperfect. Berger (\cite{Berger63}) in 1963  conjectured the following for perfect fields:
	\begin{conjecturex}[R. W. Berger \cite{Berger63}]\label{conjecture}
		Let $\k$ be a perfect field and let $R$ be a reduced one-dimensional analytic $\k$-algebra. Then $R$ is regular if and only if the torsion submodule $\ds \tau(\Omega_{ R})$ of $\Omega_R$ vanishes. 
	\end{conjecturex}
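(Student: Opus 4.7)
Berger's conjecture is a classical open problem, so a realistic plan must aim at the kinds of subfamilies the abstract advertises rather than at a full proof. One direction is immediate: if $R$ is regular then $R \cong \k\ps{t}$, the Jacobian $\mathcal{J}$ in the stated presentation reduces to a single row that is a unit, and $\Omega_R$ is free of rank one, hence torsion-free. All of the work lies in the converse, which one attempts in contrapositive form: assuming $R$ is singular (still reduced, one-dimensional, analytic), construct an explicit nonzero element of $\tau(\Omega_R)$.

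Restricting to the domain case as the paper does, I would pass to the normalization $\bar R \cong \k\ps{t}$, denote by $v$ the induced discrete valuation on $R$, and let $S = v(R) \subseteq \NN$ be the numerical semi-group. Regularity of $R$ is equivalent to $S = \NN$, so under the singularity hypothesis the gap set $\NN \setminus S$ is nonempty. Let $\mathfrak c$ be the conductor of $R$ in $\bar R$ and $c$ the smallest integer with $\m_R^c \subseteq \mathfrak c$; this is the numerical datum the abstract flags. Writing $x_i$ for the image of the $i$-th indeterminate with $v(x_i) = a_i$, every $f \in I$ yields the Jacobian relation $\sum_i \partial_i f \, dx_i = 0$ in $\Omega_R$. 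Candidate torsion classes take the form $\omega = \sum_i g_i\, dx_i$ where the $g_i$ come from partial derivatives of a carefully chosen conductor element, and the annihilator is sought among elements whose valuation lies just outside $S$ or just above $c$.

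The heart of the plan is then the non-vanishing step: certifying that such an $\omega$ is not already in the image of $\mathcal{J}$. Here one brings in both ingredients of the abstract in tandem. The orders of units of $\bar R$ appearing when one rewrites $x_i$-monomials modulo $I$ control how the candidate $\omega$ reduces, so comparing these orders with $S$ isolates the precise valuation at which $\omega$ first becomes nontrivial. A hypothesis that certain monomials, indexed by the threshold $c$, are absent from the monomial support of a chosen generating set of $I$ then prevents any syzygy from cancelling this leading term, which is exactly what promotes $\omega$ to a nonzero class in $\Omega_R$.

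The main obstacle is this very non-vanishing verification; membership in the image of the Jacobian is intractable without strong combinatorial control of $I$. Accordingly my plan is to push the valuation-theoretic reduction until the question becomes one purely about $S$, the integers $a_i$, $c$, and the orders of units of $\bar R$, and then to invoke the missing-monomial hypothesis as the bridge that turns the resulting semigroup statement into the desired torsion element, thereby recovering the conjecture in precisely the cases where such monomials can be shown absent, generalizing \cite{ABC1}.
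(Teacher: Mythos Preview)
This is Berger's Conjecture, which remains open; the paper does not prove it in full but establishes it for the subfamilies in Theorems~\ref{simultaneously diagonalizable}, \ref{prop on two valuations}, \ref{an+an-1 is bigger than c}, and~\ref{m^n in C}. Your proposal rightly acknowledges this and names several of the correct ingredients (valuation semigroup, conductor, monomial support of $I$), and the easy direction is fine. But as a plan for the paper's actual results it has a genuine gap: it omits the paper's central technical device and misidentifies how non-vanishing is verified.

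The device you are missing is the auxiliary over-ring $S=R[\mathfrak{C}_R/x_1]$ of Theorem~\ref{kernelofnew_proof}. For the main results the paper does \emph{not} attempt to certify directly that a candidate $\omega\in\Omega_R$ avoids the image of the Jacobian of $I$. Instead it passes to $\Omega_S$, where the new generators $T_1,\dots,T_s$ are pure powers of $t$, builds $ns+\binom{s}{2}$ explicit $\k$-independent torsion elements $[x_i,T_j]$ and $\Gamma_{ij}$ there (Proposition~\ref{torsions}, Theorem~\ref{totalcount}), and then invokes the counting/pullback principle of Theorem~\ref{thm on one more torsion}: one further $\k$-independent torsion element in $\Omega_S$ with non-unit entries in the last $s$ rows forces a nonzero torsion in $\Omega_R$. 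Each of the paper's theorems supplies that one extra element under its respective hypothesis. Non-vanishing is never argued by ``comparing unit orders with the semigroup to isolate a leading valuation'' as you propose; it is checked by reducing modulo a carefully chosen ideal $J$ (typically $\m_S^2$, or a monomial ideal built from $\ms(I)$ together with a valuation cutoff) and then appealing either to the explicit $\k$-basis of $\tau(\Omega_{S/\m_S^2})$ from \cite{ABC1} or to the Monomial Case criterion (Proposition~\ref{monomialcase}). In Theorem~\ref{m^n in C} the missing-monomial hypothesis is used precisely so that the survivor $(a_2+a_1(N-1))x_1^{N-2}x_2\,dx_1$ passes that criterion after reduction, not to block Jacobian syzygies in $\Omega_R$ directly. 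Without the over-ring construction and this reduction-mod-$J$ mechanism, your plan would remain stuck at exactly the non-vanishing obstacle you yourself flag.
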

It follows from the above presentation that if $R$ is regular (i.e., $n=1$), then $\Omega_R$ is free.  Hence it is the converse direction that is of interest. Thus we assume $n\geq 2$ throughout this article.

 A direct generalization of this conjecture for higher dimensional surfaces does not hold (see for example \cite[Example Pg 160]{Matsuoka}, \cite[Satz 9.4]{scheja1970differentialmoduln}). But with added constraints on $R$ (such as Cohen-Macaulayness of $R$), a generalized version does exist due to R. H\"ubl and D. Arapura independently (\cite{Hubl,Arapura89}).  
There have been numerous instances in which \Cref{conjecture} has been established by a variety of different approaches (see \cite{scheja1970differentialmoduln}, \cite{herzog1971wertehalbgruppe}, \cite{Herzog78} \cite{Buchweitz}, \cite{Ulrich81} \cite{Koch},  \cite{HerzogWaldi84},  \cite{HerzogWaldi86}, \cite{Berger88}, \cite{pohl1989torsion1}, \cite{Guttes1990}, \cite{Isogawa}, \cite{Pohl91}, \cite{ABC1}, \cite{ABC2}, \cite{huneke2021torsion}, \cite{maitra2020partial}). 
An excellent exposition on the earlier ones among the above listed results can be found in \cite{Berger_article}. An Artinian variant of the conjecture, named the Artin-Berger conjecture, was explored in \cite{ABC1} recovering some of the earlier cases.


 
Recently, the authors of this article along with C. Huneke \cite{huneke2021torsion} demonstrated a new approach for explicitly constructing torsion in $\Omega_R$, where $R$ is assumed to be a domain with residue field $\k$ of characteristic $0$ and embedding dimension $n$. In that article, we provided a strategy for first constructing an over-ring $S$ such that $R\subseteq S\subseteq \overline{R}=\k\ps{t}$, and embedding dimension of $S$ is $n+s$.  Here $\overline{R}$ denotes the integral closure of $R$ in its field of fractions $\operatorname{Quot(R)}$. We then demonstrated that constructing ``sufficiently many" torsion elements in $\Omega_S$  enables us to pull back a torsion element to $\Omega_R$. In \Cref{torsions} of this article, we show that $$\lambda(\Omega_S)\geq ns+{s\choose 2}$$ (where $\lambda(M)$ denotes the length of a module $M$), which is one less than the required number of torsion elements meeting the above sufficiency criteria. This settles \cite[Remark 4.10]{huneke2021torsion}. 



Identifying $R$ within $\overline{R}=\k\ps{t}$ allows us to write $R=\k\ps{\alpha_1t^{a_1},\dots,\alpha_nt^{a_n}}$ where $\alpha_i$'s are units in $\overline{R}$. The valuation semi-group of $R$, namely $v(R)$, is critical for our methods throughout this article. 
In fact, the precise valuations $a_i$ and the valuation of the conductor ideal, $\cC_R=R:_{\text{Quot}(R)}\overline{R}$, are critical in our work of constructing torsion elements. Moreover, the ``order" of the unit $\alpha_d$ in the description of $x_d=\alpha_dt^{a_d}$ also plays a role. Given $\alpha_d=\sum_{i=0}^\infty u_{di}t^i$, a unit in $\k\ps{t}$, its order is defined to be $$o(\alpha_d)=v(\alpha_d-u_{d0})=\min_{i\geq 1}\{i~|~u_{ji}\neq 0 \}.$$
We state our first result here which is quite technical, but we illustrate with examples that the conditions are often satisfied (see \Cref{prop on two valuations} and \Cref{an+an-1 is bigger than c}).
\begin{theoremx}
	Let $R=\k\ps{\alpha_1t^{a_1},\dots,\alpha_nt^{a_n}}\subseteq \overline{R}=k[[t]]$ be as above with $\C_R=(t^{c_R})\overline{R}$ and $\k$,  an algebraically closed field of characteristic zero.
Then $\tau(\Omega_R)$ is nonzero in the following cases:
\begin{enumerate}
	\item the valuations $a_n,a_{n-1}$ satisfy $a_n+a_{n-1}\geq c_R$.
	\item 	
		 there exist $i,d$ such that $\ds o(\alpha_d)+a_i\geq c_R$, $\ds o(\alpha_d)<\infty$ and any one of the following statements holds.
		\begin{enumerate}
		    \item $o(\alpha_d)\leq o(\alpha_i)<\infty $,
		    \item $d\geq i$.
		\end{enumerate}
\end{enumerate}
\end{theoremx}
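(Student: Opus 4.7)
My plan for this theorem is to produce explicit nonzero torsion elements in $\Omega_R$ in each of the stated cases. The unifying setup is this: since $R$ is a one-dimensional complete local domain with $\overline{R}=\k\ps{t}$, the natural differential map
$$\phi : \Omega_R \longrightarrow \overline{R}\,dt, \qquad dr \longmapsto r'(t)\,dt$$
has kernel exactly $\tau(\Omega_R)$; indeed $\phi$ factors through $\Omega_R\otimes_R K \cong K\,dt$ where $K=\operatorname{Quot}(R)=\operatorname{Quot}(\overline{R})$, and $\overline{R}\,dt\hookrightarrow K\,dt$. Thus exhibiting torsion amounts to producing a nonzero element of $\ker\phi$.

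For case (1) the candidate is the Wronskian-type element
$$\omega_1 \;=\; a_{n-1}\,x_{n-1}\,dx_n \;-\; a_n\,x_n\,dx_{n-1}.$$
A direct computation using $x_i'(t)=a_i\alpha_i t^{a_i-1}+\alpha_i' t^{a_i}$ shows that the $t^{a_n+a_{n-1}-1}$-terms cancel, leaving
$$\phi(\omega_1) \;=\; \bigl(a_{n-1}\alpha_{n-1}\alpha_n' - a_n\alpha_n\alpha_{n-1}'\bigr)\, t^{a_n+a_{n-1}}\,dt.$$
Since $a_n+a_{n-1}\geq c_R$, this element lies in $\C_R\cdot dt$, so it equals $\phi(df)$ for a formally integrated $f\in\overline{R}$ with $v(f)\geq a_n+a_{n-1}+1>c_R$, hence $f\in\C_R\subseteq R$. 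Then $\omega_1-df\in\ker\phi=\tau(\Omega_R)$. For case (2) I would run the same strategy with
$$\omega_2 \;=\; a_i\,x_i\,dx_d \;-\; a_d\,x_d\,dx_i,$$
so that $\phi(\omega_2)=(a_i\alpha_i\alpha_d'-a_d\alpha_d\alpha_i')\,t^{a_i+a_d}\,dt$. Since $o(\alpha_d)<\infty$ and, in subcase (a), $o(\alpha_d)\leq o(\alpha_i)$, the lowest-valuation behavior is controlled by $\alpha_d'$, which gives $v(\phi(\omega_2))\geq a_i+a_d+o(\alpha_d)-1$. Subcase (b), namely $d\geq i$, forces $a_d\geq a_i$ and allows the analogous estimate through a different comparison. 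Combined with $o(\alpha_d)+a_i\geq c_R$ and $a_d\geq 1$, in either subcase we obtain $v(\phi(\omega_2))\geq c_R$, and formal integration yields $f\in R$ with $\omega_2-df\in\tau(\Omega_R)$.

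The principal obstacle will be proving $\omega_j-df\neq 0$ in $\Omega_R$, and this is where the technical hypotheses (2a), (2b) really earn their keep. Non-vanishing amounts to showing that the representing vector in $R^n$ does not lie in the image of the Jacobian $\mathcal{J}$, whose columns are gradients of elements $g_\ell\in I\subseteq\m_R^2$. I plan to rule this out by a valuation-semigroup analysis: examine the $t$-valuations of each component of $\sum_\ell r_\ell\nabla g_\ell$ (constrained by $v(r_\ell)+\min_i v(\partial g_\ell/\partial x_i)$), and match these against the known valuations of the components of $\omega_j-df$. The comparison hypotheses on $o(\alpha_d),o(\alpha_i)$ and on $d\geq i$ should provide exactly the rigidity needed to show that no such linear combination can produce the correct leading terms simultaneously in the relevant slots; in particular, subcase (a) prevents a cancellation between the $\alpha_d'$ and $\alpha_i'$ contributions, while subcase (b) ensures that the highest-valuation column of $\mathcal{J}$ cannot reach the slot where $\omega_2-df$ is supported. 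This bookkeeping — tracking semigroup elements, cancellations from the units $\alpha_\bullet$, and the contribution of $\C_R$ — is the technical heart of the argument, and it is also what forces the hypothesis to be stated in its somewhat intricate two-part form.
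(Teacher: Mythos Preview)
Your construction for case~(1) is sound and in fact \emph{simpler} than the paper's route. The paper does not work directly in $\Omega_R$ here: it passes to the over-ring $S=R[\C_R/x_1]$, produces the torsion element there, checks it is $\k$-independent of the $ns+\binom{s}{2}$ elements $[x_i,T_j],\Gamma_{ij}$ built in Section~3, and then invokes the pull-back machinery of \cite[Theorem~4.9]{huneke2021torsion}. Your Wronskian-plus-integration idea avoids all of that. What you get wrong is the \emph{nonvanishing} step: a direct ``valuation-semigroup analysis'' of the Jacobian image is both vague and unnecessary. The clean argument (and the one the paper uses throughout) is to reduce modulo $\m_R^2$. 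After arranging $\C_R\subseteq\m_R^2$ (otherwise one is done by \cite[Theorem~3.1]{huneke2021torsion}), your correction term $f$ lies in $\m_R^2$, so $df$ dies in $\Omega_{R/\m_R^2}$, while $\omega_1$ maps to $(a_{n-1}+a_n)\,\overline{x_{n-1}}\,d\overline{x_n}$, which is nonzero by \cite[Proposition~2.6]{ABC1}. That is the whole nonvanishing argument; no semigroup bookkeeping on Jacobian columns is needed or likely to succeed.

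Case~(2b) has a genuine gap. With $\omega_2=a_ix_i\,dx_d-a_dx_d\,dx_i$ you get $v(\phi(\omega_2)/dt)=a_i+a_d+\min\bigl(o(\alpha_d),o(\alpha_i)\bigr)-1$. The hypotheses of (2b) give $o(\alpha_d)+a_i\geq c_R$ and $a_d\geq a_i$, but place \emph{no} constraint on $o(\alpha_i)$; if $o(\alpha_i)<o(\alpha_d)$ the valuation can drop below $c_R$, and your antiderivative $f$ need not lie in $R$. (Concretely: $a_i=5$, $a_d=6$, $o(\alpha_d)=10$, $o(\alpha_i)=1$, $c_R=15$ satisfies all of (2b) but gives $v(\phi(\omega_2)/dt)=11$.) The paper handles case~(2) by an entirely different, and shorter, mechanism: from $o(\alpha_d)+a_d\geq o(\alpha_d)+a_i\geq c_R$ one sees $x_d-\alpha_{d0}t^{a_d}\in\C_R$, hence $t^{a_d}\in R$; together with the already-monomial $t^{a_1}$ this gives two monomial generators, and the conclusion follows from \cite[Remark~3.3]{huneke2021torsion}. (Case (2a) is handled the same way, monomializing $x_i$ instead.) If you prefer to stay within your framework, the fix for (2b) is to replace $\omega_2$ by $a_1x_1\,dx_d-a_dx_d\,dx_1$ after first monomializing $x_1$, so that $\alpha_1'=0$ and the offending $o(\alpha_i)$-term disappears; then $v(\phi/dt)=a_1+a_d+o(\alpha_d)-1\geq c_R+a_1-1\geq c_R$, and nonvanishing again follows by reduction modulo $\m_R^2$.
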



It is possible to get hold of a torsion element by mapping $\Omega_R$ to a rank one torsion free module (see \Cref{torsionfinding}). However, proving that it is non-zero is the most challenging part. Our discussion in  \Cref{monomial case} provides an approach towards achieving this goal, by going modulo a monomial ideal. We implement this method and obtain our second main result. This is the broadest class where we establish the truth of Berger's Conjecture (see \Cref{m^n in C}). 
\begin{theoremx}\label{second main}
		Let $R=\k\ps{X_1,\dots,X_n}/I$ be a one-dimensional local domain where $\k$ is algebraically closed of characteristic zero and $I\subseteq(X_1,\dots,X_n)^2$. Let $x_i$ denote the image of $X_i$ in $R$ and let the conductor be $\mathfrak{C}_R$. 
		 Suppose $\m^N\subseteq \mathfrak{C}_R$ and $x_1^{N-1},x_1^{N-2}x_2$, $N\geq 2$, are not in the monomial support ideal of $I$. Then the torsion submodule  $\tau(\Omega_R)$ is nonzero.
\end{theoremx}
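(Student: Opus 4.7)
The plan is to produce an explicit nonzero torsion element in $\Omega_R$ by combining the torsion-detection map of \Cref{torsionfinding} with the monomial-ideal reduction from \Cref{monomial case}. The natural candidate is
\[
\omega \;=\; d\!\left(x_1^{N-1}x_2\right) \;=\; (N-1)\,x_1^{N-2}x_2\,dx_1 \;+\; x_1^{N-1}\,dx_2 \;\in\; \Omega_R,
\]
whose coefficients of $dx_1$ and $dx_2$ are precisely the two monomials singled out by the hypothesis.

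First I would verify that a suitable multiple or variant of $\omega$ is torsion. The key input is that $x_1^{N-1}x_2 \in \m^N \subseteq \mathfrak{C}_R$, so $x_1^{N-1}x_2 \cdot \overline{R} \subseteq R$ and, in particular, $x_1^N \cdot \overline{R} \subseteq R$. Feeding this into the map of \Cref{torsionfinding} from $\Omega_R$ to a rank-one torsion-free module (morally, $dx_i \mapsto x_i'$ with values in a conductor-twisted quotient of $\overline{R}$), the conductor containment produces an explicit nonzero element of $R$ whose multiplication against the image of $\omega$ vanishes. Pulled back, this exhibits an annihilator for the class of $\omega$ in $\Omega_R/\tau(\Omega_R)$, so $\omega$ represents a torsion class.

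The main obstacle is to show $\omega \neq 0$ in $\Omega_R$. Here I would follow the strategy of \Cref{monomial case} and pass to the quotient of $R$ by the monomial support ideal $J$ of $I$, suitably enlarged by a power of $\m$ sitting inside $\mathfrak{C}_R$. Any relation killing $\omega$ in $\Omega_R$ must come from $R$-combinations of Jacobian rows $\sum_i (\partial f/\partial x_i)\,dx_i$ with $f \in I$. Modulo $J$, the coefficient of $dx_1$ in such a combination is supported only on monomials of the form $(\partial/\partial x_1)(m)$ for $m \in \mathrm{supp}(I)$, and similarly for $dx_2$. The hypothesis excludes $x_1^{N-1}$ and $x_1^{N-2}x_2$ from $\mathrm{supp}(I)$, so no differentiation of a monomial in $\mathrm{supp}(I)$ can produce the coefficient $x_1^{N-1}$ of $dx_2$ or the coefficient $x_1^{N-2}x_2$ of $dx_1$ appearing in $\omega$. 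Hence $\omega$ survives the quotient and is therefore nonzero in $\Omega_R$.

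The technical heart lies in the last step: one must control how $R$-multiples of Jacobian rows interact, since multiplication by $\m$ shifts monomial degrees upward and could in principle conspire to reproduce the leading terms of $\omega$. The monomial-ideal reduction, with cutoff given by $\m^N \subseteq \mathfrak{C}_R$, is designed to kill such higher-degree contributions, so that the obstruction reduces to a finite combinatorial check on leading monomials. The two hypothesis monomials are precisely what settles that check. Once both steps are assembled, $\omega$ (or its natural variant) is a nonzero torsion class, establishing the theorem.
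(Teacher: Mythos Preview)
Your candidate $\omega = d(x_1^{N-1}x_2)$ is never torsion. Under the map $f\colon \Omega_R \to \Omega_{\overline{R}}$ of \Cref{torsionfinding}, any exact differential $dr$ is sent to $\frac{dr}{dt}\,dt$; since $r = x_1^{N-1}x_2$ is a nonzero non-unit in $\overline{R} = \k\ps{t}$, its $t$-derivative is nonzero, so $f(\omega)\neq 0$ and $\omega \notin \tau(\Omega_R)=\ker f$. Your appeal to ``an explicit nonzero element of $R$ whose multiplication against the image of $\omega$ vanishes'' cannot succeed: $\Omega_{\overline{R}}$ is torsion-free over $R$, so nothing nonzero annihilates $f(\omega)$.

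The element the paper actually constructs is of Euler type, obtained from the trivial identity $\frac{dx_2}{dt}\frac{dx_1}{dt}-\frac{dx_1}{dt}\frac{dx_2}{dt}=0$ multiplied by $t^{(N-2)a_1+1}$. After monomializing $x_1=t^{a_1}$ this produces
\[
\Big(a_2\,x_1^{N-2}x_2 + t^{(N-2)a_1+a_2+1}\tfrac{d\alpha_2}{dt}\Big)\,dx_1 \;-\; a_1\,x_1^{N-1}\,dx_2,
\]
not $(N-1)x_1^{N-2}x_2\,dx_1 + x_1^{N-1}\,dx_2$. The coefficient pattern $(a_2,-a_1)$ rather than $(N-1,1)$ is precisely what forces the image in $\Omega_{\overline{R}}$ to vanish.

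Even with the correct element there is a second gap your outline misses: the correction term $t^{(N-2)a_1+a_2+1}\frac{d\alpha_2}{dt}$ need not lie in $R$. From $\m^N\subseteq\mathfrak{C}_R$ one only gets $(N-1)a_1+a_2\geq c_R$, hence $(N-2)a_1+a_2\geq c_R-a_1=c_S$, so the coefficient is guaranteed to lie in the birational extension $S=R[\mathfrak{C}_R/x_1]$ but not in $R$. Thus the torsion element lives in $\Omega_S$, and one must then verify it is $\k$-linearly independent from the $ns+\binom{s}{2}$ standard torsion elements $[x_i,T_j],\Gamma_{ij}$ of \Cref{totalcount} and invoke the pullback machinery of \Cref{thm on one more torsion} to descend to $\Omega_R$. (Only in the special case $x_1^{N-2}x_2\in\mathfrak{C}_R$ can one work directly in $\Omega_R$; the paper treats this separately.) The monomial reduction via \Cref{monomial case} that you sketch is indeed the tool used to check nonvanishing, but it is applied to the Euler-type element in $\Omega_S$, and the hypotheses on $x_1^{N-1},x_1^{N-2}x_2$ are also needed to ensure these monomials do not appear in $\ms$ of the defining ideal of $S$.
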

The {monomial support ideal of $I$ is the same as $\ms(I)$ appearing in \mbox{
\cite{PoliniUlrichVitulli07}}\hspace{0pt}
. The } theorem recovers the main theorem of \cite{ABC1}. We also present other interesting examples that satisfy the hypothesis of the above theorem.

This raises the question if condition (2) in \Cref{second main} can be relaxed. So far our methods have been unable to resolve this. Getting a complete answer even in the case when $\cC_R=\m^N$ for some large enough $N$ would be interesting. The methods used in this article are innovative, in that, they explicitly compute torsion elements in $\Omega_R$, whereas most of the previous approaches to the \Cref{conjecture} have been unable to do so.

\subsection*{Acknowledgements}Both authors are extremely grateful to C. Huneke for insightful talks about the article's content. Numerous results in this article are ascribed to the concepts he imparted to us. Additionally, we are obliged to D. Arapura for providing us with a geometric viewpoint on Berger's Conjecture.
	\section{Setting and Preliminaries}\label{Section:Setting}
Throughout this article we will assume that $\k$ is an algebraically closed field of characteristic $0$, and $(R,\mathfrak{m}_R,\k)$ is a one-dimensional complete equicharacteristic local $\k$-algebra which is a domain with embedding dimension $n$ ($\mu_R(\mathfrak{m}_R)=n$).  Choosing $t$ to be a uniformizing parameter of the integral closure $\overline{R}$ of $R$,  we may assume that $\overline{R}=\k\ps{t}$. As a result, we may write $R=\k\ps{\alpha_1 t^{a_1},\dots,\alpha_nt^{a_n}}$ where $\alpha_i$'s are units in $\overline{R}$ and $a_1< a_2< \cdots< a_n$. Using the technique as in the first paragraph of the proof of \cite[Theorem 3.1]{huneke2021torsion}, we can alternatively write $R=\k\ps{ t^{a_1},\alpha_2t^{a_2},\dots,\alpha_nt^{a_n}}$. We briefly explain the process. First we multiply by a nonzero element of $\k$ to assume that the constant term of $\alpha_1$ is $1$. Let $s=\beta t$ where $\beta^{a_1}=\alpha_1$. Such a $\beta$ exists due to Hensel's Lemma \cite[THeorem 7.3]{eisenbud_Commalg}. The characterisitic of $\k$ being zero is important here. Notice that under this change of variables, $\k\ps{t}=\k\ps{s}$ and $s=\beta t=t+\beta_1 t^2+\beta_2t^3+\cdots$ where $\beta=1+\sum_{i=1}^\infty\beta_it^i$. Notice that $s^{a_1}=(\beta t)^{a_1}=\alpha_1t^{a_1}$. Also $\alpha_it^{a_i}=\alpha_i's^{a_i}$. Then $R=\k\ps{\alpha_1t^{a_1},\dots,\alpha_nt^{a_n}}\cong \k\ps{s^{a_1},\alpha_2's^{a_2},\dots,\alpha_n's^{a_n} }$.

We also define the valuation $v$ on $\overline{R}$ given by $v(p(t))=a$ if $p(t)=t^a\alpha$ where $\alpha$ is a unit in $\k\ps{t}$  (also called $\ord(-)$, see for example \cite[Example 6.7.5]{SwansonHuneke}). Let $v(R)$ denote the valuation semi-group of $R$.

 The conductor $\cC_R$ is the largest common ideal of $R$ and its
integral closure, $\overline R $. 
Since $\ds \overline{R}=k\ps{t}$, we have that $\cC_R=(t^i)_{i\geq c_R}\overline{R}$ where $c_R$ is the least integer such that $t^{c_R-1}\not \in R$, and $t^{c_R+i}\in R$ for all $i\geq 0$. The number $c_R$ is characterized as the least valuation in $\cC_R$. It is clear from this  discussion that there cannot be any element $r\in R$, such that $v(r)=c_R-1$.  Since $\ds \overline{R}$ is finitely generated over $R$ (\cite[Theorem 4.3.4]{SwansonHuneke}),  $\cC_R\neq 0$, and it is never all of $R$ unless $R$ is regular.

We define an epimorphism 
\begin{align*}
	\Phi:\k\ps{X_1,\dots,X_n}&\twoheadrightarrow R\\
	\Phi(X_i)&=\alpha_it^{a_i}\text{ for } 1\leq i\leq n.\nonumber 
\end{align*}
We denote the kernel of $\Phi$ by $I=(f_1,\dots,f_m)$ and hence have the natural isomorphism $R\cong \k\ps{X_1,\dots,X_n}/I$. Since $\edim R=n$, $I$ is contained in $\m_R^2$ where $\m_R=(X_1,\dots,X_n)$, the maximal ideal of $\k\ps{X_1,\dots,X_n}$.  Such rings are called \textit{analytic $\k$-algebras}. We will interchangeably use $\alpha_i t^{a_i}$ for $x_i$, the images of $X_i$ in the quotient $\k\ps{X_1,\dots,X_n }/I$.

\subsection{Universally Finite Module of Differentials}

\begin{definition}\label{defmoduleofdiff}
	Let $R$ be an analytic one-dimensional $\k$-algebra as above, which is a domain. Let $I=(f_1,\dots,f_m)$ where $f_j\in P=\k\ps{X_1,\dots,X_n}, n\geq 2$. We assume that $I\subseteq \m_P^2$ where $\m_P=(X_1,\dots,X_n)$. Then the \textit{universally finite module of differentials over $\k$}, denoted by $\Omega_{R}$, has a (minimal) presentation given as follows:
	$$R^{m}\xrightarrow{\left[\frac{\partial f_j}{\partial x_i}\right]} R^n\to \Omega_{ R}\to 0$$ where $\left[\frac{\partial f_j}{\partial x_i}\right]$ is the Jacobian matrix of $I$, with entries in $R$. 
\end{definition}
We refer the reader to the excellent resource \cite{Kunzbook} for more information. Let $\ds \tau(\Omega_{ R})$ denote the torsion submodule of $\Omega_{ R}$. 

%

%

\subsection{Computing Torsion}\label{torsionfinding} It is well-known that $\Rank_R(\Omega_R)=\Rank_R(\Omega_{\overline{R}})=1$ ($R$ and $\bar{R}$ have the same fraction field) and $\Omega_{\overline{R}}$ is torsion-free. Let $f:\Omega_{ R}\to \Omega_{\overline{R}}$ be the natural map induced by the inclusion $R\hookrightarrow\overline{R}$. Recall that the torsion submodule of $\Omega_R$ is the kernel of the homomorphism $\Omega_R\rightarrow \Omega_R\otimes Q$ where $Q=\text{Quot}(R)$. Since  $\Omega_{\overline{R}}$ is torsion-free, we have $\Omega_{\overline{R}}\hookrightarrow\Omega_R\otimes Q$. Thus $\ker f$ is the torsion submodule of $\Omega_R$ and hence $\tau(\Omega_R)=\ker f$. The map $f$ acts as follows: 
$$f(dx_i)=\frac{dx_i}{dt}dt.$$
Since $\Omega_{\overline{R}}$ is isomorphic to $\overline{R}$,  $\Omega_{R}$ surjects to an $R$-submodule $ \sum_{i=1}^nR\frac{dx_i}{dt}$ of $ \overline{R}=\k\ps{t}$. 
Thus the torsion submodule, $\tau(\Omega_R)$, consists of the column vectors $\begin{bmatrix}
	r_1\ldots r_n
\end{bmatrix}^t$ such that $ \sum_{i=1}^nr_i\frac{dx_i}{dt}=0$. Evidently, $\tau(\Omega_R)$ is nonzero precisely when the tuples $\begin{bmatrix}
	r_1\ldots r_n
\end{bmatrix}^t$ are not in the image of the presentation matrix (Jacobian matrix of $I$) of $\Omega_R$, all entries written in terms of the uniformizing parameter $t$. 

One of the main issues while constructing torsion is to check whether the column vectors $[r_1,\dots,r_n]^t$ is nonzero in $\Omega_R$. One technique is to show that this column vector is nonzero in $\Omega_{R/J}$ for some suitable ideal $J$ in $R$ which will then imply that it is nonzero in $\Omega_R$. Though we have a more general theorem for this purpose in \Cref{monomial case}, a version which we use in some of the results in the results in the sequel is the following theorem.
\begin{theorem}{\cite[Proposition 2.6]{ABC1}}
    Suppose $R=\k\ps{\alpha_1t^{a_1},\dots,\alpha_nt^{a_n}}$ with maximal ideal $\m_R=(x_1,\dots,x_n)$. Then the torsion $\tau(\Omega_{R/\m_R^2})$ has ${n\choose 2}$ $\k$-linearly independent torsion elements $\overline{x_i}d\overline{x_j}$ for $i<j$. Here $\overline{x_i}$ denotes the image of $x_i$ in $R/\m_R^2$.
\end{theorem}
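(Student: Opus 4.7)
The plan is to reduce the whole statement to a linear-algebra calculation in the Artinian ring $S:=R/\m_R^2$. Since $I\subseteq \m_P^2=(X_1,\dots,X_n)^2$ by hypothesis, we have $S\cong \k\ps{X_1,\dots,X_n}/\m_P^2$, a ring depending only on $n$ and $\k$, with $\dim_\k S=n+1$ and $\m_S^2=0$. Viewed as an $R$-module, $\Omega_S$ is annihilated by $\m_R^2$, and because $R$ is a one-dimensional domain, $\m_R^2$ contains nonzerodivisors; hence every element of $\Omega_S$ is automatically $R$-torsion, and the only substantive claim is the $\k$-linear independence of the $\binom{n}{2}$ displayed elements.

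Next I would unpack the presentation of $\Omega_S$ from \Cref{defmoduleofdiff}: it is the $S$-module generated by $d\overline{x_1},\dots,d\overline{x_n}$ modulo the Jacobian relations of the generators $X_iX_j$ of $\m_P^2$, namely $\overline{x_i}\,d\overline{x_j}+\overline{x_j}\,d\overline{x_i}=0$ for $1\leq i\leq j\leq n$. Because $\m_S^2=0$, each such relation element is annihilated by $\m_S$, so the $S$-submodule of relations inside $S^n=\bigoplus_i S\,d\overline{x_i}$ collapses to the $\k$-linear span of the $n+\binom{n}{2}$ displayed elements: multiplying them by anything in $\m_S$ gives zero, and scalar multiplications are already accounted for.

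The remaining step is a dimension count inside a single $\k$-vector space. The ambient $S^n$ has $\k$-dimension $n(n+1)$ with basis $\{d\overline{x_i}\}_i\cup\{\overline{x_j}\,d\overline{x_i}\}_{i,j}$; the relation elements are visibly $\k$-linearly independent there, the off-diagonal ones $\overline{x_i}\,d\overline{x_j}+\overline{x_j}\,d\overline{x_i}$ ($i<j$) involving disjoint pairs of basis vectors, and the diagonal ones $2\,\overline{x_i}\,d\overline{x_i}$ being nonzero by the assumption $\operatorname{char}\k=0$. Quotienting yields $\dim_\k\Omega_S=n+\binom{n}{2}$ with $\k$-basis $\{d\overline{x_i}\}_i\cup\{\overline{x_i}\,d\overline{x_j}\}_{i<j}$, and the latter subset is precisely the claimed set, so its $\k$-linear independence follows at once. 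The only delicate points in the plan are confirming that no hidden relations in $\Omega_S$ appear beyond the displayed Jacobian ones---guaranteed by $\m_S^2=0$---and invoking $\operatorname{char}\k=0$ to prevent a collapse at the diagonal.
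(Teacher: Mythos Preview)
The paper does not supply its own proof of this statement; it is quoted from \cite[Proposition 2.6]{ABC1} and used as a black box throughout. Your argument is correct and self-contained: the identification $R/\m_R^2\cong \k\ps{X_1,\dots,X_n}/\m_P^2$ (valid since $I\subseteq\m_P^2$), the explicit Jacobian presentation of $\Omega_S$ by the relations $\overline{x_i}\,d\overline{x_j}+\overline{x_j}\,d\overline{x_i}=0$, the observation that $\m_S$ kills these relation elements so the image of the Jacobian is just their $\k$-span, and the resulting dimension count all go through exactly as you describe. Your interpretation of the word ``torsion'' (as $R$-torsion via restriction of scalars along $R\to R/\m_R^2$, automatic because $\m_R^2$ annihilates $\Omega_S$) is a sensible way to read the statement; in any case every application in the paper uses only the $\k$-linear independence of the elements $\overline{x_i}\,d\overline{x_j}$ inside $\Omega_{R/\m_R^2}$, which is precisely what your basis computation establishes.
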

We often avoid using the $\overline{\cdot}$ for notational convenience.
\subsection{Quasi-homogeneous rings} These rings form an important class which satisfies the Berger conjecture. A ring $R$ is said to be quasi-homogeneous, if there  is a surjection $\Omega_R\twoheadrightarrow \m_R$. Kunz and Ruppert in \cite{KunzRuppert77} showed that such rings are completions of graded rings. 
Scheja (\cite{scheja1970differentialmoduln}) showed that the torsion submodule of the module of differentials of such rings are always nonzero. In fact, he showed that $\mu(\tau(\Omega_R))\geq {\edim R\choose 2}$ when $R$ is quasi-homogeneous.
%
	\subsection{The extension $R[\frac{\mathfrak{C}_R}{x_1}]$}
The basic facts about the finite birational extension $S=R[\frac{\mathfrak{C}_R}{x_1}]$ of $R$ are  discussed in detail in \cite[Section 4]{huneke2021torsion}. 

We write $\mathfrak{C}_R/{x_1}$ to denote the set of elements of the form ${c}/{x_1}$ where $c\in \C_R$.  Using \cite[Proposition 2.9]{herzog1971wertehalbgruppe} and \cite[Lemma 4.1]{huneke2021torsion}, we get that $\ds \mathfrak{C}_R/x_1$ is generated as an $R$-submodule of $\overline{R}$ (in fact, it is the ideal $(t^{c-a_1})\overline{R}$) by $t^{b_i}$ with $b_1< \cdots< b_s$ where $b_i\in [c-a_1,c-1]\setminus v(R)$. For further details, see \cite[Notation 4.2]{huneke2021torsion}. It turns out that $\mathfrak{C}_R/x_1$ is the conductor of the ring $S$. In other words the conductor $\C_S$ equals the ideal $(t^{c-a_1})\overline{R}$. A complete description of the ring $S$ is as follows:

\begin{theorem}\cite[Thoerem 4.6]{huneke2021torsion}\label{kernelofnew_proof}
	Let  $\ds \mathfrak{C}_R\subseteq \m_R^2$. Set $P = \k\ps{X_1,...,X_n}$. Choose $S,b_1,\dots,b_s$ as described above. Then there exists a presentation of $S$ as follows: $$S=R\Big[\frac{\mathfrak{C}_R}{x_1}\Big]=\frac{\k\ps{X_1,\dots, X_n,T_1,\dots, T_s}}{\ker\Phi + \(X_iT_j-g_{ij}(X_1,\dots,X_n),T_kT_l-h_{kl}(X_1,\dots, X_n)\)_{{1\leq i\leq n, 1\leq j\leq s \atop 1\leq k\leq l\leq s}}}$$ where 
	$g_{ij}(X_1,\dots,X_n), h_{kl}(X_1,\dots, X_n)\in (X_1,\dots,X_n)^2P$ for all $i,j,k,l$. \\Moreover, $g_{ij}(x_1,\dots,x_n), h_{kl}(x_1,\dots, x_n)\in\C_R$. In particular, when $\C_R\subseteq \m_R^2$, it follows that $\edim S=n+s$. 

\end{theorem}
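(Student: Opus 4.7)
The plan is to construct an explicit surjection from $Q := \k\ps{X_1,\dots,X_n,T_1,\dots,T_s}$ onto $S$ and then identify its kernel with the ideal displayed in the statement. First I would define $\Psi\colon Q \twoheadrightarrow S$ by extending $\Phi$ via $T_j \mapsto t^{b_j}$. Surjectivity is immediate from the fact, recalled just before the theorem, that $S = R + \sum_{j=1}^s R\, t^{b_j}$, since $\C_R/x_1$ is generated as an $R$-submodule of $\overline{R}$ by $t^{b_1},\dots,t^{b_s}$.

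Next I would produce the defining relations. For each $i,j$, the element $x_i t^{b_j} = \alpha_i t^{a_i+b_j}$ has valuation $\geq a_1 + (c_R - a_1) = c_R$, hence lies in $\C_R \subseteq \m_R^2$; so one can pick $g_{ij} \in (X_1,\dots,X_n)^2 P$ with $\Phi(g_{ij}) = x_i t^{b_j}$, and then $X_iT_j - g_{ij} \in \ker\Psi$. For each pair $k,l$, the hypothesis $\C_R \subseteq \m_R^2$ forces $c_R \geq 2a_1$, hence $b_k + b_l \geq 2(c_R - a_1) \geq c_R$, so $t^{b_k}t^{b_l} \in \C_R \subseteq \m_R^2$; pick $h_{kl} \in (X_1,\dots,X_n)^2 P$ accordingly. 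This produces all the relations of the statement. Write $\mathcal{K}$ for the ideal they generate together with $\ker\Phi$; by construction $\mathcal{K} \subseteq \ker\Psi$.

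The core of the proof is the reverse containment, which I would approach by a normal form argument. Using $T_kT_l \equiv h_{kl}$ I would reduce every pure $T$-monomial of degree $\geq 2$ to a pure $X$-polynomial, and using $X_iT_j \equiv g_{ij}$ I would absorb every mixed monomial $X^\alpha T_j$ (with $|\alpha| \geq 1$) into a pure $X$-polynomial. Iterating, every element of $Q$ becomes congruent modulo $\mathcal{K}$ to an expression of the form $p(X_1,\dots,X_n) + \sum_{j=1}^s c_j T_j$ with $p \in P$ and $c_j \in \k$. Convergence of this iteration in the $(X_1,\dots,X_n,T_1,\dots,T_s)$-adic topology is guaranteed because $g_{ij}, h_{kl} \in (X_1,\dots,X_n)^2 P$, so each rewrite strictly increases the $X$-order of the terms it produces without reintroducing higher-degree $T$ content. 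Now if such a normal-form element lies in $\ker\Psi$, then $p(x_1,\dots,x_n) + \sum c_j t^{b_j} = 0$ in $\overline{R}$; the decisive observation is that each $b_j \in [c_R - a_1, c_R - 1]\setminus v(R)$, while $v(p(x_1,\dots,x_n)) \in v(R)\cup\{\infty\}$. Letting $b_{j_0}$ be the minimum $b_j$ with $c_{j_0} \neq 0$, the term $c_{j_0} t^{b_{j_0}}$ would be the unique summand of valuation exactly $b_{j_0}$, which cannot be cancelled; thus every $c_j = 0$ and then $p \in \ker\Phi \subseteq \mathcal{K}$. Hence $\ker\Psi = \mathcal{K}$.

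Finally, for the $\edim$ assertion, since $\ker\Phi \subseteq (X_1,\dots,X_n)^2$ by the standing hypothesis $I \subseteq \m_P^2$, and since all $g_{ij}, h_{kl}$ lie in $(X_1,\dots,X_n)^2 P$, every generator of $\mathcal{K}$ sits in $(X_1,\dots,X_n,T_1,\dots,T_s)^2$. Therefore the images of $X_1,\dots,X_n,T_1,\dots,T_s$ form a minimal generating set of $\m_S$, giving $\edim S = n + s$. The main technical obstacle I anticipate is a careful justification of the normal-form reduction in the power series setting: the rewriting is an infinite process and one must verify it descends to a well-defined element of $P$ rather than merely a formal sum. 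Once that is secured, the valuation comparison produces the kernel identification cleanly.
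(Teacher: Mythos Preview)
The paper does not supply its own proof of this statement: it is quoted verbatim as \cite[Theorem 4.6]{huneke2021torsion} and used as a black box. So there is no in-paper argument to compare against.

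On its own merits your proposal is correct and is precisely the natural argument one expects for such a presentation result. The construction of $\Psi$, the valuation estimates $a_i+b_j\geq c_R$ and $b_k+b_l\geq 2(c_R-a_1)\geq c_R$ (the latter using $c_R\geq 2a_1$ from $\C_R\subseteq\m_R^2$), the normal-form reduction, and the valuation comparison exploiting $b_j\notin v(R)$ are all sound. Your flagged technical point about convergence of the rewriting process is the right concern; it is handled exactly as you indicate, since each substitution $X_iT_j\to g_{ij}$ or $T_kT_l\to h_{kl}$ replaces a monomial by terms of at least the same total $(X,T)$-degree with strictly fewer $T$-factors, so after finitely many steps every fixed-degree homogeneous component stabilizes and the limit exists in the adic topology. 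Uniqueness of the normal form is not needed for the kernel computation, only existence, which you use correctly.
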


\section{Finer computations of Torsion elements in $\Omega_S$}\label{explicit Sec}
One of the main results in this section is the existence of torsion elements in $\Omega_S$. We show that we can in fact construct $ns+{s\choose 2}$ $\k$-linearly independent torsion elements in $\Omega_S$ which completes \cite[Remark 4.10]{huneke2021torsion}.

%
The following theorem establishes that once  $ns+{s\choose 2}+1$ torsion elements are constructed with entries in $\m_S$, then we can pull back a nonzero torsion element from $\Omega_S$ to $\Omega_R$. 
\begin{theorem}{\cite[Theorem 4.9]{huneke2021torsion}} \label{thm on one more torsion}
	Let $R=\k\ps{\alpha_1t^{a_1},\dots,\alpha_nt^{a_n}}$ with conductor $\mathfrak{C}_R\subseteq \m_R^2$. Construct $S=R\left[\frac{\mathfrak{C}_R}{x_1}\right]=R[T_1,\dots,T_s]$ as in \Cref{kernelofnew_proof}. Let $\tau(\Omega_R),\tau(\Omega_S)$ represent the torsion submodules of $\Omega_R,\Omega_S$ respectively. If $\lambda(\tau(\Omega_S))\geq ns+{s \choose 2}+1$ and all these torsion elements have non-units in the last $s$ rows (corresponding to $dT_1,\dots,dT_s$), then a $\k$-linear combination of these torsion elements can be pulled back to a nonzero torsion element in $\tau(\Omega_R)$. In particular, $\tau(\Omega_R)\neq 0$.
\end{theorem}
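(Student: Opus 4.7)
The plan is to construct a $\k$-linear obstruction map $\Theta\colon\tau(\Omega_S)\to V$ landing in a $\k$-vector space of dimension at most $ns+\binom{s}{2}$, whose kernel consists of torsion elements that can be modified, via column operations from the Jacobian $\mathcal{J}_S$, into representatives of the form $(r_1',\ldots,r_n',0,\ldots,0)^t$ with each $r_i'\in R$. Any such representative yields an element of $\Omega_R$ that still maps to zero in $\Omega_{\overline{R}}$, i.e., a torsion element of $\Omega_R$. The length hypothesis $\lambda(\tau(\Omega_S))\geq ns+\binom{s}{2}+1$ then forces $\ker\Theta\neq 0$, producing the desired pullback.

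The first step is to unpack the structure of $\mathcal{J}_S$ afforded by \Cref{kernelofnew_proof}. Its columns split into three groups: the $m$ columns coming from $I=\ker\Phi$ have zeros in the last $s$ rows; the $ns$ columns $v_{ij}$ coming from $X_iT_j-g_{ij}$ carry $x_i$ in row $n+j$ together with a $T_j$-term and $R$-entries in the $X$-rows; the $\binom{s+1}{2}$ columns $w_{kl}$ coming from $T_kT_l-h_{kl}$ carry $T_l$ and $T_k$ in rows $n+k$ and $n+l$ (doubled on the diagonal) together with $R$-entries in the $X$-rows. Because $x_iT_j=g_{ij}\in R$ and $T_kT_l=h_{kl}\in R$, the ring $S$ equals $R+RT_1+\cdots+RT_s$ as an $R$-module, so each entry of a torsion representative $\tau=(r_1,\ldots,r_n,u_1,\ldots,u_s)^t$ admits a decomposition $r=a_0+\sum_k a_kT_k$ with $a_\bullet\in R$.

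Next I would identify the obstructions explicitly. Since $u_l\in\mathfrak{m}_S$ by hypothesis, the constant part $u_l^{(0)}$ lies in $\mathfrak{m}_R$, and the $v_{ij}$-columns with scalars in $R$ let us add any element of $\mathfrak{m}_R=(x_1,\ldots,x_n)R$ to $u_l$, killing $u_l^{(0)}$. The $w_{kl}$-columns then add arbitrary symmetric $R$-valued matrices to the $T$-coefficient matrix $U=(u_l^{(k)})$, reducing $U$ to its antisymmetric part and contributing $\binom{s}{2}$ obstruction slots. These column operations simultaneously produce $T$-parts $r_i^{(k)}$ in the $X$-row entries; requiring these to vanish (to force $r_i'\in R$) gives the remaining $ns$ obstruction slots. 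Together these assemble into a $\k$-linear map $\Theta$ whose target $V$ has $\dim_\k V\leq ns+\binom{s}{2}$. The principal technical obstacle is precisely this counting step: verifying that the coupled modifications of the $X$-rows and $T$-rows by the same column scalars collapse the obstruction into exactly these $ns+\binom{s}{2}$ $\k$-coordinates, with no residual $R$-valued obstruction beyond them.

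Finally, any nonzero $\tau^*\in\ker\Theta$ provided by the dimension count yields a reduced representative $\tau'=(r_1',\ldots,r_n',0,\ldots,0)^t$ with $r_i'\in R$, hence an element of $\Omega_R$ lying in $\tau(\Omega_R)$. Its nonvanishing in $\Omega_R$ is automatic: if $(r_1',\ldots,r_n')$ were in the image of the Jacobian of $I$ over $R$, padding with zeros in the last $s$ positions would place $\tau'$ in the image of $\mathcal{J}_S$, so $\tau'$ and hence $\tau^*$ would be zero in $\Omega_S$, contradicting $0\neq\tau^*\in\tau(\Omega_S)$.
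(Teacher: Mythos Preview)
This theorem is not proved in the present paper; it is quoted verbatim from \cite[Theorem 4.9]{huneke2021torsion} and used as a black box. So there is no proof here to compare against.

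That said, your outline follows the right strategy, and the gap you yourself flag is genuine: as written, your decomposition $r=a_0+\sum_k a_kT_k$ with $a_k\in R$ is too weak to make the obstruction finite-dimensional over $\k$. With $R$-valued coefficients the ``antisymmetric part of $U$'' lives in an $R$-module of rank $\binom{s}{2}$, not a $\k$-space of that dimension, and your counting collapses. The missing ingredient is that the relations $x_iT_j=g_{ij}\in R$ force $\m_RT_j\subseteq R$, so in fact
\[
S \;=\; R \,\oplus\, \k T_1 \,\oplus\, \cdots \,\oplus\, \k T_s
\]
as $\k$-vector spaces (directness follows since $v(T_j)=b_j\notin v(R)$). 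Thus every entry decomposes uniquely as $a_0+\sum_k c_kT_k$ with $a_0\in R$ and $c_k\in\k$, and moreover $\m_S$ annihilates $S/R$. This is what makes your obstruction genuinely land in $\k^{ns}\oplus\k^{\binom{s}{2}}$: the $T$-coefficient matrix $U=(u_l^{(k)})$ now has \emph{scalar} entries, the $w_{kl}$-columns kill its symmetric part leaving $\binom{s}{2}$ scalar obstructions; the $R$-part $u_l^{(0)}\in\m_R$ is cleared by $R$-multiples of the $v_{ij}$-columns, whose effect on the first $n$ rows modulo $R$ depends only on the residues $\bar r\in\k$ of the scalars; and the residual $T$-parts of the first $n$ rows give the remaining $ns$ scalar obstructions. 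Once you insert this observation, your dimension count, the existence of a nonzero $\tau^*\in\ker\Theta$, and your non-vanishing argument in $\Omega_R$ all go through.
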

\begin{remark}\label{quick cons}An obvious consequence of this theorem is that if $\lambda(\tau(\Omega_S))\geq ns+{s \choose 2}+k, k\geq 1$ and all these torsions have non-units in the last $s$ rows, then we can pull back $k$ torsion elements to $\Omega_R$, each of which are $\k$-linear combinations of the $ns+{s \choose 2}+k$ torsion elements. Notice that these $k$ torsion elements are $\k$-linearly independent in $\Omega_R$ as they are $\k$-linearly independent in $\Omega_S$. Thus $\lambda(\tau(\Omega_R))\geq k$.
\end{remark}

{It is easy to see~} that we can construct ${s\choose 2}$ torsion elements, namely $\Gamma_{ij}:=b_jT_jdT_i-b_iT_idT_j$. This can be clearly seen as $T_i$ are monomials $t^{b_i}$ in $S$.  Now \Cref{torsions} shows that we can indeed construct $ns$ torsion elements in $\Omega_S$ too. 

If $x_i$ is a monomial $t^{a_i}$ to begin with, then we can construct the torsion element $a_ix_idT_j-b_jT_jdx_i$. This is clearly nonzero in $\Omega_S$. But often $x_i$ are not of this form and hence the need for the following theorem.

\begin{proposition}\label{torsions}
	Let $R=\k\ps{\alpha_1t^{a_1},\dots,\alpha_nt^{a_n}}$ with conductor $\mathfrak{C}_R$. Construct $S=R\left[\frac{\mathfrak{C}_R}{x_1}\right]=R[T_1,\dots,T_s]$. Then
	\begin{align}\label{torsion elements [x_i,T_j]}
		[x_i,T_j]=\begin{bmatrix}
			-a_ix_i\partial f_{ij}/\partial x_1\\
			\vdots\\
			b_j(T_j+f_{ij}(\underline{x})+\sum\limits_{k=1}^s\delta_k T_k)-a_ix_i\partial f_{ij}/\partial x_i\\
			\vdots\\
			-a_ix_i(1+\delta_j )\\
			\vdots\\
			-a_ix_i\delta_s
		\end{bmatrix}\hspace{-1em}
		\begin{array}{c;{2pt/2pt}c}
			~&~\\ ~&~\\ ~&~\\ ~&~\\ ~&~\\ ~&~\\ ~&~\\~&~\\
		\end{array}\hspace{-1em}
		\overbrace{
			\begin{array}{c}
				dx_1 \\ \vdots \\ dx_i \\ \vdots \\ dT_j \\\vdots \\ dT_s
			\end{array}
		}^{\text{basis}}
	\end{align}
	where $f_{ij}\in (x_1,\dots,x_n)$ and $\delta_r\in\k$, are $\k$-linearly independent nonzero torsion elements in $\tau(\Omega_S)$.


\end{proposition}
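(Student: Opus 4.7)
For each pair $(i,j)$ the plan is to build $[x_i,T_j]$ as the coordinate expression in $\Omega_S$ of an explicit relation $b_j G_{ij}\,dx_i-a_i x_i\,dG_{ij}=0$ for a suitable element $G_{ij}\in S$. I choose $G_{ij}$ so that, after normalizing $\alpha_i(0)=1$ (a harmless rescaling of the generator $x_i$, as in the Setting), its image in $\overline R=\k\ps{t}$ is the formal power series $\alpha_i^{b_j/a_i}\,t^{b_j}$, which is well-defined in characteristic zero via $\exp((b_j/a_i)\log\alpha_i)$. A direct logarithmic-derivative computation then gives $a_ix_iG_{ij}'-b_jx_i'G_{ij}=0$ in $\overline R$.

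The main technical step is to show that $G_{ij}\in S$ and to produce the concrete decomposition $G_{ij}=T_j+f_{ij}(\underline x)+\sum_{k=1}^s\delta_kT_k$ with $f_{ij}\in(X_1,\dots,X_n)\k\ps{X_1,\dots,X_n}$ and $\delta_k\in\k$. Because $\alpha_i(0)=1$, $G_{ij}-T_j$ has $t$-valuation $>b_j$; and since $b_j\geq c_R-a_1=c_S$ and $v(S)\supseteq v(R)\cup\{b_1,\dots,b_s\}$ contains every integer $\geq c_S$, I can strip off leading terms of the remainder inductively: at each stage if the leading exponent lies in $v(R)$ I subtract a scalar multiple of an element of $R$ of that valuation (added to $f_{ij}$); otherwise the exponent coincides with some $b_k$ and I subtract a scalar multiple of $T_k$ (added to $\delta_kT_k$). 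The process converges in the $\m_S$-adic topology, and since every term removed has strictly positive valuation, $f_{ij}$ has no constant term. With this normalization $\delta_j=0$ automatically.

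Given this decomposition, the expansion $dG_{ij}=dT_j+\sum_k(\partial f_{ij}/\partial x_k)\,dx_k+\sum_k\delta_k\,dT_k$ produces, after collecting terms, exactly the column vector in \eqref{torsion elements [x_i,T_j]} as the representative of $b_jG_{ij}\,dx_i-a_ix_i\,dG_{ij}$ in $\Omega_S$. Since this column maps to $b_jx_i'G_{ij}-a_ix_iG_{ij}'=0$ in $\Omega_{\overline R}=\overline R\,dt$, it is a torsion element.

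Finally, for nonvanishing and $\k$-linear independence I reduce modulo $\m_S^2$ and invoke the cited theorem \cite[Proposition 2.6]{ABC1} applied to $S$: the $\binom{n+s}{2}$ elements $\bar y_k\,d\bar y_l$ ($k<l$) with $y_1,\dots,y_{n+s}=x_1,\dots,x_n,T_1,\dots,T_s$ form a $\k$-linearly independent set of torsion elements in $\Omega_{S/\m_S^2}$. Using $\bar x_i\,d\bar T_j+\bar T_j\,d\bar x_i=0$ in $\Omega_{S/\m_S^2}$, a short computation shows that the image of $[x_i,T_j]$ has coefficient $\pm(a_i+b_j)\neq 0$ at the generator $\bar x_i\,d\bar T_j$; that for $i'\neq i$ every entry of $[x_{i'},T_{j'}]$ carries $\bar x_{i'}$ as a factor modulo $\m_S^2$, so such a torsion contributes only to generators of the form $\bar x_{i'}\,d\bar y_\ell$; and that for fixed $i$ the matrix recording the coefficient of $\bar x_i\,d\bar T_j$ in $[x_i,T_{j'}]$ is upper triangular in $j'$ with nonzero diagonal $a_i+b_j$, the off-diagonal entries being proportional to $\delta_j^{(i,j')}$, which vanishes whenever $b_j<b_{j'}$ because the expansion of $G_{ij'}-T_{j'}$ has valuation $>b_{j'}$. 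Hence the $ns$ elements $\{[x_i,T_j]\}$ are $\k$-linearly independent and nonzero in $\tau(\Omega_S)$. The main obstacle throughout is the valuation bookkeeping needed to secure the decomposition of $G_{ij}$; once this is in place, both the torsion identity and the independence argument fall out of formal manipulation.
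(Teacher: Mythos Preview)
Your argument is correct and is essentially the paper's proof in different clothing. The paper performs a change of uniformizing parameter $t\mapsto s=\alpha_i^{1/a_i}t$ (via Hensel's Lemma) so that $x_i=s^{a_i}$ becomes a monomial, takes the obvious torsion $b_j s^{b_j}\,d(s^{a_i})-a_i s^{a_i}\,d(s^{b_j})$, and then converts $s^{b_j}$ back to the original generators; your element $G_{ij}=\alpha_i^{b_j/a_i}t^{b_j}$ is exactly $s^{b_j}$, and your logarithmic-derivative check replaces the ``obvious after monomialization'' step. The decomposition $G_{ij}=T_j+f_{ij}(\underline x)+\sum_{k>j}\delta_kT_k$ and the linear-independence argument in $\Omega_{S/\m_S^2}$ via \cite[Proposition~2.6]{ABC1} match the paper's almost verbatim (your diagonal coefficient $\pm(a_i+b_j)$ is in fact the correct one; the paper's displayed $(b_j-a_i)$ appears to be a sign slip, harmless since $a_i\neq b_j$).
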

\begin{proof}
	We follow the process of ``monomialization'' as in \cite[Theorem 3.1]{huneke2021torsion} (see \Cref{Section:Setting}). Let $\beta=\alpha_i^{1/a_i}$ and $s=\beta t$ (since the characteristic of $\k$ is $0$, by Hensel's lemma \cite[Theorem 7.3]{eisenbud_Commalg}, there exists such a $\beta\in \overline{R}$). Thus $R\cong\k\ps{\alpha_1's^{a_1},\alpha_2's^{a_2},\dots,s^{a_i},\dots,\alpha_n's^{a_n}}$. 
	Under this isomorphism (change of variables), let the new variables be $x_1',\dots,x_n',T_1',\dots,T_s'$. The valuations do not change under this isomorphism. Since $T_1'\dots,T_s'\in\C_S$, we assume that, without loss of generality, under another change of variables, $T_1',\dots,T_s'$ are monomials $s^{b_1},\dots,s^{b_s}$. Notice that under this change of variables $x_i'=s^{a_i}$. Clearly, for $j=1,\dots,s$ we have 
	\begin{align}\label{equation of torsion in new vars}
	b_jT_j' d(x_i')-a_ix_i'd(T_j')
	\end{align}
	 is a nonzero torsion element of $\Omega_S$ (the nonzero part can be obtained by  {showing that $\overline{b_jT_j' d(x_i')-a_ix_i'd(T_j')}\equiv (b_j-a_i)\overline{T_j'}d\overline{x_i'}$ in $\Omega_{S/\m_S^2}$, which is nonzero} using \cite[Proposition 2.6]{ABC1}). 

	Now we convert it back to the original variables $x_1,\dots,x_n,T_1,\dots,T_s$. Under the change of variables, $x_i$ is mapped to $x_i'$. So we just need to convert $T_j'$ back to $T_j$. We have $T_j'=s^{b_j}=\beta^{b_j}t^{b_j}=t^{b_j}+\sum _k \gamma_kt^{b_j+k}$. Since  {$\mathfrak{C}_S=(t^{c_R-a_1})\overline{R}$ and $c_R-a_1$} is smaller than $b_j$, we can replace $t^{b_j+k}$ as a function of $x_1,\dots,x_n,T_1,\dots,T_s$. Thus we have
	\begin{align*}
		T_j'=t^{b_j}+\sum _k \gamma_kt^{b_j+k}=T_j+g_{ij}(x_1,\dots,x_n,T_1,\dots,T_s).
	\end{align*}
	Due to the nature of the defining ideal of $S$,
	\begin{align}\label{f_i's in [x_i,T_j]}
		g_{ij}(x_1,\dots,x_n,T_1,\dots,T_s)=f_{ij}(x_1,\dots,x_n)+\sum\limits_{k>j}^s\delta_{ik} T_k	
	\end{align}
	where $\delta_{ik}\in\k$. In the latter equality, only $T_k$ with $k>j$ appear because the only nonzero terms which can appear in $g_{ij}(\underline{x},\underline{T})$ are those which have valuations more than that of $T_j$.

	Therefore
	\begin{align*}
		dT_j'=dT_j+\sum_{u=1}^n\frac{\partial f_{ij}}{\partial x_u}dx_u+\sum_{k>j}^s\delta_{ik} dT_k
	\end{align*}
	Putting the above equation in \eqref{equation of torsion in new vars} we have
	\begin{align}
		[x_i,T_j]=\begin{bmatrix}
			-a_ix_i\partial f_{ij}/\partial x_1\\
			\vdots\\
			b_j(T_j+f_{ij}(\underline{x})+\sum\limits_{k>j}^s\delta_{ik} T_k)-a_ix_i\partial f_{ij}/\partial x_i\\
			\vdots\\
			-a_ix_i\\
			-a_ix_i\delta_{ij+1}\\
			\vdots\\
			-a_ix_i\delta_{is}
		\end{bmatrix}\hspace{-1em}
 \begin{array}{c;{2pt/2pt}c}
			~&~\\ ~&~\\ ~&~\\ ~&~\\ ~&~\\ ~&~\\ ~&~\\~&~\\~&~\\~&~\\
		\end{array} \hspace{-1em}
		\overbrace{
			\begin{array}{c}
				dx_1 \\ \vdots \\ dx_i \\ \vdots \\ dT_j\\dT_{j+1} \\\vdots \\ dT_s 
			\end{array}
		}^{\text{basis}}.
	\end{align}

	Next we show that $[x_i,T_j], 1\leq i\leq n, 1\leq j\leq s$ are $\k$-linearly independent. First let $f_{ij}(x_1,\dots,x_n)=\sum_k\beta_{ijk}x_k+f_{ij}'(x_1,\dots,x_n)$ where $f_{ij}'(x_1,\dots,x_n)\in\m_R^2,\beta_{ijk}\in\k$. Now consider the torsion elements $[x_i,T_j]$ in $\Omega_{S/\m_S^2}$. These would take the form
	\begin{align}\label{description of [x_i,T_j] in S/m2}
		\begin{bmatrix}
			-a_ix_i\beta_{ij1}\\\vdots \\-a_ix_i\beta_{iji-1}\\b_j(T_j+\sum_{k\neq i}\beta_{ijk}x_k+\sum\limits_{k>j}^s\delta_{ijk} T_k)\\-a_ix_i\beta_{iji+1}\\\vdots\\-a_ix_i\beta_{ijn}\\0\\\vdots\\0 \\-a_ix_i\\\vdots\\-a_ix_i\delta_{ijs}
		\end{bmatrix}&= \begin{bmatrix}
			0\\\vdots \\0\\(b_j-a_i)T_j+\sum\limits_{k>j}^s\delta_{ijk}(b_j-a_i) T_k-(b_j+a_i)\sum_{k\leq i}\beta_{ijk}x_k\\-(b_j+a_i)x_i\beta_{iji+1}\\\vdots\\-(b_j+a_i)x_i\beta_{ijn}\\0\\\vdots\\0
		\end{bmatrix},
	\end{align}
	where the equality is due to the fact that  {$x_1x_i,\dots,x_{i-1}x_i,x_iT_1,\dots,x_iT_s\in\m_S^2$ }(the latter   {containment implies that $x_jdx_i=-x_idx_j,j\leq i$ } and $x_idT_j=-T_jdx_i$ in $\Omega_{S/\m_S^2}$). So to show that $[x_i,T_j], 1\leq i\leq n, 1\leq j\leq s$ are $\k$-linearly independent, it is enough to show that these torsions are $\k$-linearly independent in $\Omega_{S/\m_S^2}$.
	
	We first show that $[x_i,T_j], 1\leq j\leq s$, for a fixed $i$, are $\k$-linearly independent. It is known that $x_kdx_l,T_jdx_i,1\leq k<l\leq n,1\leq j\leq s$ are $\k$-linearly independent  in $\Omega_{S/\m_S^2}$ 
\cite[Proposition 2.6, Corollary 2.7]{ABC1}. Now writing the coefficients of part of the ordered basis $x_kdx_l,T_jdx_i,1\leq k<l\leq n,1\leq j\leq s$ in \eqref{description of [x_i,T_j] in S/m2}, we get a $s\times ({n\choose 2}+ns)$ matrix
	 \begin{align*}
 \scalemath{0.62}{\begin{bmatrix}
[x_i,T_1]\\ [x_i,T_2]\\ [x_i,T_3]\\ \vdots \\ [x_i,T_s]
\end{bmatrix}=
		\left[\begin{array}{ccccccccc|cccccccccc}
			0 &\cdots &0 &-(b_1+a_i)\beta_{i11} & \cdots &-(b_1+a_i)\beta_{i1n}&0 &\cdots &0&0 &\cdots &0&	b_1-a_i & \delta_{i12}(b_1-a_i) &\cdots & \delta_{i1s} (b_1-a_i)&0 &\cdots &0\\
			0 &\cdots &0 &-(b_2+a_i)\beta_{i21} & \cdots &-(b_1+a_i)\beta_{i2n}&0 &\cdots &0&0 &\cdots &0&	0 & b_2-a_i & \cdots & \delta_{i2s}(b_2-a_i)&0 &\cdots &0\\
		0 &\cdots &0 &0 & \cdots &0&0 &\cdots &0&0 &\cdots &0&	0 & 0 & \cdots& \delta_{i3s}(b_3-a_i)&0 &\cdots &0\\
		\vdots &\vdots &\vdots&\vdots &\vdots &\vdots &\vdots&\vdots &\vdots&\vdots &\vdots &\vdots&	\vdots &\vdots & \ddots &\vdots&\vdots &\vdots &\vdots\\
			0 &\cdots &0 &-(b_s+a_i)\beta_{is1} & \cdots &-(b_1+a_i)\beta_{isn}&0 &\cdots &0&0 &\cdots &0&	0 & 0 & 0\cdots 0 &b_s-a_i&0 &\cdots &0\\
		\end{array}\right]\left[\begin{array}{c}
		x_1dx_2\\\vdots\\x_{n-1}dx_n\\\hline T_1dx_1\\\vdots\\ T_sdx_n
	\end{array}\right] }
	 \end{align*}
Assume the first block is $A_i$ which is a $s\times {n\choose 2}$  matrix, and the second block is $[0\cdots 0 ~B_i~0\cdots 0]$ which is a $s\times {ns}$ matrix, where $B_i$ is an upper triangular $s\times s$ matrix. The columns of $A_i$ correspond to the ordered basis $x_kdx_l,1\leq k<l\leq n$ and that of $[0\cdots 0 ~B_i~0\cdots 0]$ correspond to the ordered basis $T_jdx_i,1\leq j\leq s$. Clearly the matrix $B_i$ has rank $s$ as $a_i,b_1,\dots,b_s$ are distinct and nonzero. Since the rows are $\k$-linearly independent,  \\ {$[x_i,T_j]=\left((b_j-a_i)T_j+\sum\limits_{k>j}^s\delta_{ijk}(b_j-a_i) T_k-(b_j+a_i)\sum_{k\leq i}\beta_{ijk}x_k\right)dx_i-\sum_{k=i+1}^na_ix_i\beta_{ijk}dx_k$ } are $\k$-linearly independent for a fixed $i$  {and $1\leq j\leq s$}.

If we consider all the torsion elements $[x_i,T_j]$ for arbitrary $i,j$, then we get the following matrix
\begin{align*}
\begin{bmatrix}
	[x_1,T_1]\\\vdots\\ [x_n,T_s]
\end{bmatrix}=\begin{bmatrix}
	A_1 & B_1 & 0 &0 &\cdots & 0\\
	A_2 & 0 & B_2&0 &\cdots & 0\\
	\vdots &\vdots & \vdots &\vdots &\vdots &\vdots\\
	A_n & 0 & 0 & 0&\cdots & B_n
\end{bmatrix} \left[\begin{array}{c}
x_1dx_2\\\vdots\\x_{n-1}dx_n\\\hline T_1dx_1\\\vdots\\ T_sdx_n
\end{array}\right]
 \end{align*}
where each of the $B_i$ are upper triangular invertible $s\times s$ matrix. Thus the rows are $\k$-linearly independent and hence $[x_i,T_j]$'s are $\k$-linearly independent.
\end{proof}
\begin{theorem}\label{totalcount}
	With the hypothesis as in \Cref{torsions}, we have \[\lambda(\tau(\Omega_S))\geq ns+{s\choose 2}.\]
\end{theorem}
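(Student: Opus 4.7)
The plan is to enlarge the family of $ns$ torsion elements $[x_i, T_j]$ produced in \Cref{torsions} by $\binom{s}{2}$ further torsion elements
\[
\Gamma_{ij} := b_j T_j\,dT_i - b_i T_i\,dT_j, \qquad 1 \le i < j \le s,
\]
and to show that the resulting collection of $ns + \binom{s}{2}$ elements is $\k$-linearly independent in $\tau(\Omega_S)$. Since $\tau(\Omega_S)$ is a finitely generated torsion module over the one-dimensional local $\k$-algebra $S$, it has finite length; and since the residue field of $S$ is $\k$, each composition factor is isomorphic to $\k$, so $\lambda_S(\tau(\Omega_S)) = \dim_\k \tau(\Omega_S)$. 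Therefore the linear independence above immediately yields the claimed bound.

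The first step is to check each $\Gamma_{ij}$ is torsion. Taking the generators $T_i$ of $\mathfrak{C}_R/x_1$ to be the pure monomials $t^{b_i} \in \overline{R}$, the natural map $\Omega_S \to \Omega_{\overline{R}}$ sends $\Gamma_{ij}$ to $b_j t^{b_j} \cdot b_i t^{b_i-1}\,dt - b_i t^{b_i} \cdot b_j t^{b_j-1}\,dt = 0$, so indeed $\Gamma_{ij} \in \ker(\Omega_S \to \Omega_{\overline R}) = \tau(\Omega_S)$.

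The central step is to pass both families to $\Omega_{S/\m_S^2}$, which by \cite[Proposition 2.6]{ABC1} has the $\k$-basis $\{y_i\,dy_j : i<j\}$ with $(y_1,\ldots,y_{n+s}) = (x_1,\ldots,x_n,T_1,\ldots,T_s)$. By the computation at the end of the proof of \Cref{torsions}, each $[x_i, T_j]$ reduces to an expression supported only on the $x_k\,dx_l$ and $T_l\,dx_i$ basis vectors, i.e.\ strictly in the $dx$-sector of the basis. On the other hand, \Cref{kernelofnew_proof} gives $T_i T_j = h_{ij}(\underline{x})$ with $h_{ij}\in (x_1,\ldots,x_n)^2$, so $T_iT_j\equiv 0$ in $S/\m_S^2$, whence $T_j\,dT_i + T_i\,dT_j = 0$ in $\Omega_{S/\m_S^2}$. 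Consequently $\Gamma_{ij}$ reduces to $(b_i+b_j)\,T_j\,dT_i$, a nonzero scalar multiple of a single basis vector living entirely in the $T\,dT$-sector. The $\binom{s}{2}$ reductions of the $\Gamma_{ij}$ are therefore $\k$-linearly independent among themselves and sit in a subspace complementary to the one containing the reductions of the $[x_i, T_j]$. Combined with the $\k$-linear independence of the latter established in \Cref{torsions}, the full set of $ns+\binom{s}{2}$ elements is $\k$-linearly independent in $\Omega_{S/\m_S^2}$, hence also in $\tau(\Omega_S)$.

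The main obstacle is purely bookkeeping in $\Omega_{S/\m_S^2}$: one must track that the $[x_i,T_j]$'s really do not contribute anything to the $T\,dT$-sector, and that the vanishing of $T_iT_j$ modulo $\m_S^2$ is available to collapse $\Gamma_{ij}$ to a single basis element. Both rest on the fact, supplied by \Cref{kernelofnew_proof}, that the polynomials $g_{ij}, h_{ij}$ defining $S$ lie in $(x_1,\ldots,x_n)^2$; granted this structural input, no computation beyond what is already present in the proofs of \Cref{torsions} and \Cref{kernelofnew_proof} is required.
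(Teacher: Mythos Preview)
Your proof is correct and follows essentially the same approach as the paper: both introduce the elements $\Gamma_{ij}=b_jT_jdT_i-b_iT_idT_j$, verify they are torsion, and establish $\k$-linear independence of the combined family $\{[x_i,T_j]\}\cup\{\Gamma_{ij}\}$ by separating the two families according to their support after reduction. The only cosmetic difference is that the paper passes to the further quotient $S/J$ with $J=(x_1,\dots,x_n)+(T_1,\dots,T_s)^2$ to kill the $[x_i,T_j]$ first, whereas you stay in $\Omega_{S/\m_S^2}$ and observe directly that the $[x_i,T_j]$ live in the $dx$-sector while the $\Gamma_{ij}$ reduce to nonzero multiples of single $T\,dT$-basis vectors; both arguments rest on the same structural inputs from \Cref{torsions} and \Cref{kernelofnew_proof}.
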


\begin{proof}We observe that $\Gamma_{ij}=b_jT_jdT_i-b_iT_idT_j$ and $[x_i,T_j]$ are $\k$-linearly independent: suppose $$0=\sum_{i,j}k_{ij}[x_i,T_j]+\sum_{r,s} k'_{rs}\Gamma_{rs}, k_{ij},k'_{rs}\in\k.$$ Now let $J=\langle x_1,\dots,x_n\rangle +\langle T_1,\dots,T_s\rangle^2$. Then in $\Omega_{S/J}$, $[x_i,T_j]=0$ (using \eqref{description of [x_i,T_j] in S/m2}). Thus in $\Omega_{S/J}$, we have $0=\sum_{r,s}k'_{rs}\Gamma_{rs}$. Since $\Gamma_{rs}$ are  $\k$-linearly independent (\cite[Proposition 2.6]{ABC1}), we have $k'_{rs}=0$. Thus we have $0=\sum_{i,j}k_{ij}[x_i,T_j]$ in $\Omega_S.$ Hence $k_{ij}=0$ by \Cref{torsions}. Thus $\lambda(\tau(\Omega_S))\geq ns+{s\choose 2}$.\end{proof}


 \begin{remark}\label{quasihomogeneous}
	In \cite[Theorem 4.11]{huneke2021torsion}, we established that $\tau(\Omega_R)$ is nonzero if $S=R\left[\frac{\C_R}{x_1}\right]$ is quasi-homogeneous. Recall that quasi-homogeneous means that there is a surjection $\Omega_{ R}\twoheadrightarrow \m_R$. In fact, the proof mentioned above proceeded by showing that there are ${n+s\choose 2}$ $\k$-linearly independent torsion elements in $\Omega_S$ and none of these have units in the last $s$ rows. However, notice that ${n+s\choose 2}=ns+{s\choose 2}+\binom{n}{2}$. Thus we can pull back ${n\choose 2}$ torsions to $\Omega_R$ using \Cref{thm on one more torsion} and \Cref{quick cons}, i.e., $\lambda(\tau(\Omega_R))\geq \binom{n}{2}$.




%
\end{remark}

	\section{Value semi-group of $R$ and nonzero torsion}
	In this section, we provide partial answers to Berger's conjecture by carefully observing the value semi-group of $R$ as well as order of the units $\alpha_j$ in $\k\ps{t}$.

We begin by stating a useful test of checking the nonvanishing of torsion in $\Omega_R$.  First notice that units in $\k\ps{t}$ are power series of the form $\sum_{i=0}^\infty u_it^i$ where $u_0\neq 0$.
\begin{notation}\label{notation of units}
	Let $\alpha_j=\sum_{i=0}^\infty u_{ji}t^i\in \k\ps{t}$ be a unit. We denote 
	\begin{align*}
		o(\alpha_j)=v(\alpha_j-u_{j0})=\min_{i\geq 1}\{i~|~u_{ji}\neq 0 \},
	\end{align*}
where $v(\cdot)$ denotes the order valuation (also called $\ord(-)$, see for example \cite[Example 6.7.5]{SwansonHuneke}). If $\alpha_j=u_{j0}\in\k$, then notice that $o(\alpha_j)=\infty$.
\end{notation}

	\begin{theorem}\label{simultaneously diagonalizable}
		Let the conductor of $R$ be $\mathfrak{C}_R=(t^{c_R})\overline{R}$. 
		Suppose there exists $i,d$ such that $\ds o(\alpha_d)+a_i\geq c_R$ and $\ds o(\alpha_d)<\infty$. Then the torsion $\tau(\Omega_R)$ is nonzero if any one of the following statements holds.
		\begin{enumerate}
		    \item $o(\alpha_d)\leq o(\alpha_i)<\infty $,
		    \item $d\geq i$.
		\end{enumerate}
		 

	\end{theorem}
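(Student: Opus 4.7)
My plan is to construct a nonzero torsion element $\tau\in\tau(\Omega_R)=\ker f$ of the form $\tau=\tau_0-\rho$, where $\tau_0=a_i x_i\,dx_d-a_d x_d\,dx_i$ is the naive candidate and $\rho\in\Omega_R$ is a correction chosen so that $f(\rho)=f(\tau_0)$ and $\rho$ projects trivially to $\Omega_{R/\m_R^2}$. Nonvanishing is then detected by the $\k$-linear independence of the elements $\overline{x_i}\,d\overline{x_j}$ in $\Omega_{R/\m_R^2}$ recalled in \Cref{torsionfinding}.

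Writing $x_k=\alpha_k t^{a_k}$ and $\alpha_k'=d\alpha_k/dt$, a direct computation yields the defect
\[
  f(\tau_0)\;=\;\eta\;:=\;(a_i\alpha_i\alpha_d'-a_d\alpha_d\alpha_i')\,t^{a_i+a_d},
\]
whose two summands have orders $a_i+a_d+o(\alpha_d)-1$ and $a_i+a_d+o(\alpha_i)-1$. In case (a), $o(\alpha_d)\leq o(\alpha_i)$ forces
\[
  v(\eta)\;\geq\; a_i+a_d+o(\alpha_d)-1\;\geq\; a_d+c_R-1\;\geq\; a_1+c_R-1,
\]
so $\eta=s\cdot\tfrac{dx_1}{dt}$ for some $s\in\mathfrak{C}_R$; I take $\rho:=s\,dx_1$, which lies in $\mathfrak{C}_R\Omega_R$ and---since $\mathfrak{C}_R\subseteq\m_R^2$ in the settings of interest---projects to zero in $\Omega_{R/\m_R^2}$. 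In case (b), $d\geq i$ gives $a_d\geq a_i$, hence $a_d+o(\alpha_d)\geq c_R$, so $\delta:=x_d-u_{d,0}t^{a_d}\in\mathfrak{C}_R$ and in particular the monomial $y_d:=u_{d,0}t^{a_d}$ lies in $R$. Replacing $x_d$ by $y_d$ in the naive candidate, the new defect becomes $-a_d u_{d,0}\alpha_i'\,t^{a_i+a_d}$; since $y_d$ is now monomial, its leading terms can be cancelled iteratively using corrections of the form $c\,y_d\,dy_d=\tfrac{c}{2}d(y_d^2)$ and $d(x_j y_d)$---both vanishing mod $\m_R^2$ since $y_d^2,\,x_j y_d\in\m_R^2$---until a residual of valuation $\geq c_R+a_1-1$ remains, absorbed as in case (a).

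Setting $\tau:=\tau_0-\rho$ (or the analogue constructed from $y_d$), we obtain $f(\tau)=0$, so $\tau\in\tau(\Omega_R)$. To verify $\tau\neq 0$, project to $\Omega_{R/\m_R^2}$: the relation $d(x_i x_d)=x_i\,dx_d+x_d\,dx_i\equiv 0$ (since $x_i x_d\in\m_R^2$) collapses $\tau_0$ to $(a_i+a_d)\,\overline{x_i}\,d\overline{x_d}$, a nonzero element of $\tau(\Omega_{R/\m_R^2})$ by \cite[Proposition 2.6]{ABC1}. By construction $\rho$ maps to zero, so $\tau$ is nonzero in $\Omega_{R/\m_R^2}$, and thus in $\Omega_R$.

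The main obstacle is case (b) when $o(\alpha_d)>o(\alpha_i)$: here the naive defect's valuation $a_i+a_d+o(\alpha_i)-1$ can drop below the threshold $c_R+a_1-1$, so $\eta$ does not lie directly in $\mathfrak{C}_R\cdot\tfrac{dx_1}{dt}$. The monomialization $y_d=u_{d,0}t^{a_d}\in R$, available precisely because $d\geq i$, is what rescues the argument: it converts the problem into a stepwise cancellation using exact differentials of elements in $\m_R^2$, all of which are invisible modulo $\m_R^2$ and hence preserve the nonvanishing detection in the final step.
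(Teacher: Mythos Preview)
Your argument for case~(a) is essentially correct when $i\neq d$: the defect $\eta$ does have valuation at least $c_R+a_1-1$, so it can be absorbed by $s\,dx_1$ with $s\in\mathfrak{C}_R$. Note, however, that when $i=d$ your candidate $\tau_0$ is identically zero and the whole construction collapses; this case must be treated separately.

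The serious gap is in case~(b). After monomializing $y_d=u_{d,0}t^{a_d}$ and pairing with $x_i$, the new defect $-a_d u_{d,0}\alpha_i' t^{a_i+a_d}$ has leading valuation $a_i+a_d+o(\alpha_i)-1$, and you propose to cancel its terms ``iteratively'' using the exact forms $d(y_d^2)$ and $d(x_j y_d)$. But $f(d(x_j y_d))$ has leading valuation $a_j+a_d-1$ and $f(d(y_d^2))$ has leading valuation $2a_d-1$; these lie in the finite set $\{a_1+a_d-1,\dots,a_n+a_d-1\}$. There is no reason the leading valuation $a_i+a_d+o(\alpha_i)-1$ of the defect belongs to this set---for instance, if $o(\alpha_i)=1$ and $a_i+1\notin\{a_1,\dots,a_n\}$, already the first cancellation step fails. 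Even if it succeeds once, the next leading term need not land in the set either. So the iteration does not terminate as claimed, and the proof breaks down precisely in the situation you identify as the main obstacle.

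The paper's proof sidesteps all of this. In case~(1) it observes that $o(\alpha_i)+a_i\geq o(\alpha_d)+a_i\geq c_R$, so $x_i-u_{i,0}t^{a_i}\in\mathfrak{C}_R$ and hence $t^{a_i}\in R$; in case~(2) the same reasoning with $a_d\geq a_i$ gives $t^{a_d}\in R$. In either case $R$ has two \emph{monomial} minimal generators ($t^{a_1}$ together with $t^{a_i}$ or $t^{a_d}$), and then \cite[Remark~3.3]{huneke2021torsion} applies directly. Concretely, once $y_d\in R$ is monomial you should pair it with $x_1=t^{a_1}$ rather than with $x_i$: the element $a_1 x_1\,dy_d - a_d y_d\,dx_1$ has \emph{zero} defect, is visibly nonzero modulo $\m_R^2$, and handles the case $i=d$ as well.
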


	\begin{proof}As in \cite[Theorem 3.1]{huneke2021torsion}, we write  $R= \k\ps{t^{a_1},\alpha_2t^{a_2}, \dots, \alpha_nt^{a_n}}$ with conductor $\C_R=(t^{c_R})\overline{R}$. 
	Notice that the $o(\alpha_i)$ remain the same by applying the above change of variables. For $(1)$, observe that $o(\alpha_i)+a_i\geq o(\alpha_d)+a_i\geq c_R$. Now 
	\begin{align*}
	    x_i=\alpha_it^{a_i}=\alpha_{i0}t^{a_i}+t^{o(\alpha_i)+a_i}b
	\end{align*} where $b\in \overline{R}$. Since $o(\alpha_i)+a_i\geq c_R$, $t^{o(\alpha_i)+a_i}b\in \cC_R\subseteq R$. Thus, $t^{a_i}=\frac{1}{\alpha_{i0}}(x_i-t^{o(\alpha_i)+a_i}b)\in R$. Thus, we get that $R=k\ps{t^{a_1},\alpha_2t^{a_2},\dots,t^{a_i},\dots,\alpha_n t^{a_n}}$. The proof is now complete using \cite[Remark 3.3]{huneke2021torsion}.
	
	For $(2)$, first observe that by our setup, $o(\alpha_d)+a_d\geq o(\alpha_d)+a_i\geq c_R$. Apply the same argument as above to see that $R=k\ps{t^{a_1},\alpha_2t^{a_2},\dots,t^{a_d},\dots,\alpha_n t^{a_n}}$. This finishes the proof.

	\end{proof}
	
The following is an example of a non-quasi-homogeneous ring $R$ where the above result can be quickly applied.
	\begin{example}
		Let $R=\k\ps{t^{8}+t^{9},t^{9}+t^{15},t^{12}+t^{20},t^{14}}=\k\ps{t^8(1+t),t^9(1+t^6),t^{12}(1+t^8),t^{14}}$. Macaulay2 computations show that the conductor is $(t^{c_R})\overline{R}=(t^{20})\overline{R}$. With $i=3, d=3$, we see that $a_i+o(\alpha_d)=12+8=20=c_R$. Thus by the above theorem, the torsion $\tau(\Omega_R)$ is nonzero. Moreover, Macaulay 2 computations show that $v(\text{trace}(\Omega_{ R}))=16$ where $\text{trace}(\cdot)$ refers to the trace ideal (see \cite{LindoTrace} for details). Hence, $R$ is not quasi-homogeneous \cite[Conclusion 2]{maitra2020partial}.
	\end{example}
	The second part of the next result appears as \cite[Theorem 4.9]{phdthesis}.
	\begin{theorem}\label{prop on two valuations}
		Suppose the valuations $a_1,\dots,a_n$ of $x_1,\dots,x_n$ of $R$ satisfies one of the following:
		\begin{enumerate}
			\item $a_1+a_n\geq c_R$,
			\item $a_{n-1}+a_n\geq c_R+a_1$
		\end{enumerate}
		Then the torsion $\tau(\Omega_R)$ is nonzero.
	\end{theorem}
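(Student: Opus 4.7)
My plan is to handle (1) by constructing an extra torsion element in $\Omega_S$ for the overring $S=R[\mathfrak{C}_R/x_1]$, and to handle (2) by a direct construction of a torsion element in $\Omega_R$.

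For (1), the hypothesis $a_1+a_n\geq c_R$ gives $a_n\geq c_R-a_1=c_S$, and since $v(x_n-u_{n0}t^{a_n})=a_n+o(\alpha_n)>c_S$ (where $u_{n0}$ is the constant term of $\alpha_n$), the element $x_n-u_{n0}t^{a_n}$ lies in $\mathfrak{C}_S\subseteq S$. Hence $t^{a_n}\in S$ and, after a change of variables, we may take $x_n=t^{a_n}$ as a minimal generator of $\m_S$. Together with $x_1=t^{a_1}$ we then have two monomial minimal generators, and
\[
\gamma_S:=a_1\,x_1\,dx_n-a_n\,x_n\,dx_1
\]
has vanishing image in $\Omega_{\overline{R}}$, so $\gamma_S\in\tau(\Omega_S)$; modulo $\m_S^2$ it equals $(a_1+a_n)\,x_1\,dx_n$, nonzero by \cite[Proposition 2.6]{ABC1}. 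To show $\gamma_S$ is $\k$-linearly independent of the $ns+\binom{s}{2}$ torsion elements from \Cref{totalcount}, I examine coefficients in the standard basis of $\Omega_{S/\m_S^2}$ in stages: the basis elements $T_j\,dx_i$ yield a triangular system in the scalars $c_{ij}$ multiplying $[x_i,T_j]$ whose diagonal entries $b_j-a_i$ are nonzero (since $a_i\in v(R)$ while $b_j\notin v(R)$), forcing $c_{ij}=0$; the basis elements $T_r\,dT_s$ force the scalars multiplying $\Gamma_{rs}$ to vanish; finally, the $x_1\,dx_n$ coefficient (to which only $\gamma_S$ still contributes) forces the scalar on $\gamma_S$ to vanish. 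Since $\gamma_S$ has zeros in its last $s$ coordinates, \Cref{thm on one more torsion} applies with $\lambda(\tau(\Omega_S))\geq ns+\binom{s}{2}+1$, producing a nonzero torsion element in $\Omega_R$.

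For (2), I work directly in $\Omega_R$, using $x_1=t^{a_1}$ from the standing setup. Differentiating with respect to $t$ and cancelling the leading terms of common valuation $a_{n-1}+a_n-1$ yields
\[
a_{n-1}\,x_{n-1}\,\tfrac{dx_n}{dt}-a_n\,x_n\,\tfrac{dx_{n-1}}{dt}=\bigl(a_{n-1}\,\alpha_{n-1}\,\alpha_n'-a_n\,\alpha_n\,\alpha_{n-1}'\bigr)\,t^{a_{n-1}+a_n}.
\]
Dividing by $dx_1/dt=a_1 t^{a_1-1}$ produces $r\in\overline{R}$ of valuation at least $a_{n-1}+a_n-a_1+1\geq c_R+1$ by (2); thus $r\in\mathfrak{C}_R\subseteq R$. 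Then
\[
\gamma:=a_{n-1}\,x_{n-1}\,dx_n-a_n\,x_n\,dx_{n-1}-r\,dx_1
\]
lies in $\tau(\Omega_R)$ by construction, and because $r\in\mathfrak{C}_R\subseteq\m_R^2$, modulo $\m_R^2$ it reduces to $(a_{n-1}+a_n)\,x_{n-1}\,dx_n$, which is nonzero by \cite[Proposition 2.6]{ABC1}.

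The hard part in (1) is the potential contribution of each $[x_i,T_j]$ to the basis element $x_1\,dx_n$ through the linear coefficients $\beta_{ijk}$ of the $f_{ij}$'s appearing in \Cref{torsions}; the triangular elimination via $T_j\,dx_i$ is precisely what circumvents this before the $x_1\,dx_n$ equation is invoked. The hard part in (2) is ensuring that $r\in\mathfrak{C}_R$, which is what the inequality $a_{n-1}+a_n-a_1\geq c_R$ buys us, together with the standing assumption $\mathfrak{C}_R\subseteq\m_R^2$ that makes the correction term disappear modulo $\m_R^2$.
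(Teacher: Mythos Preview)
Your proof is correct and follows the paper's overall strategy in both parts: for (1) you pass to $S$, monomialize $x_n$ there, and exhibit one extra torsion independent of the family $\{[x_i,T_j],\Gamma_{rs}\}$; for (2) you build a torsion directly in $\Omega_R$ that reduces to a nonzero multiple of $x_{n-1}\,dx_n$ modulo $\m_R^2$.

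The one noteworthy variation is in (2). The paper starts from the identity between $dx_1/dt$ and $dx_n/dt$ and multiplies by $\alpha_{n-1}t^{a_{n-1}-a_1+1}$ to obtain a torsion $u\,dx_1-a_1x_{n-1}\,dx_n$ with $v(u)\geq a_{n-1}+a_n-a_1$; you instead take the commutator $a_{n-1}x_{n-1}\,dx_n-a_nx_n\,dx_{n-1}$ and absorb the remainder into a $dx_1$-term, which yields $v(r)\geq a_{n-1}+a_n-a_1+1$, one unit better. Both land on the same nonzero class in $\Omega_{R/\m_R^2}$, but your route would actually go through under the marginally weaker hypothesis $a_{n-1}+a_n\geq c_R+a_1-1$. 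Your linear-independence argument in (1) via the triangular system in the $T_j\,dx_i$ coefficients is a direct version of the paper's staged quotient argument (modulo $J,J_1,J_2$). One small quibble: the inclusion $\mathfrak{C}_R\subseteq\m_R^2$ is not a standing assumption in the paper but a reduction invoked via \cite[Theorem~3.1]{huneke2021torsion}; you should say so explicitly rather than call it ``standing''.
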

	\begin{proof} Throughout the proof, we can assume that $\mathfrak{C}_R\subseteq \m_R^2$ as otherwise the conclusion follows from \cite[Theorem 3.1]{huneke2021torsion}. Further applying the same ``monomialization" as in the proof of \Cref{simultaneously diagonalizable}, we can assume that $R=k\ps{t^{a_1},\alpha_2t^{a_2},\dots,\alpha_nt^{a_n}}$.
		
		(1): Consider $S=R\left[\frac{\mathfrak{C}_R}{x_1}\right] $ with conductor $\mathfrak{C}_S=(t^{c-a_1})\overline{R}$. Since $a_n\geq c-a_1$, $x_n$, by change of variables, can be made into  monomial $t^{a_n}$. Thus $t^{a_n}=x_n'=x_n-\sum \delta_i T_i-g(\underline{x}), \delta_i\in\k$. Then $S=\k\ps{x_1,\dots,x_{n-1},x_n',T_1,\dots,T_s}=\k\ps{t^{a_1},\alpha_2t^{a_2},\dots,\alpha_{n-1}t^{a_{n-1}},t^{a_n},t^{b_1},\dots,t^{b_s}}$. Clearly $\tau=a_nx_n'dx_1-a_1x_1dx_n'$ is a nonzero torsion in $\Omega_S$. Hence (using the notations as in \Cref{explicit Sec}) {$\tau,[x_i,T_j],\Gamma_{ij}$ } are $ns+{s\choose 2}+1$ torsion elements   {in $\Omega_S$. If we show that these torsion elements are $\k$-linearly independent, then the result follows from} \Cref{thm on one more torsion}{.
		}

		{Consider 
		}\begin{align}{\label{valuations eq1}
			\gamma \tau+\sum k_{ij}[x_i,T_j]+\sum k_{ij}'\Gamma_{ij}=0.
		}\end{align}
		{First let $J=\langle x_1,\dots,x_n'\rangle+\m^2_S$; then in $\Omega_{S/J}$, }\eqref{valuations eq1} {takes the form $\sum k'_{ij}\Gamma_{ij}=0$. But these are $\k$-linearly independent and hence $k_{ij}'=0$. Now let $J_1=\langle x_n'\rangle+\m_S^2$. Then in $\Omega_{S/J_1}$, $\tau=0$ as $x_n'\in J_1$ and $dx_n'=0$ in $\Omega_{S/J_1}$. Thus }\eqref{valuations eq1} {now becomes $\sum_{i\neq n} k_{ij} [x_i,T_j]=0$. Since we already proved $[x_i,T_j]$'s } are $\k$-linearly independent{, we have $k_{ij}=0$ for $i\neq n, 1\leq j\leq s$. Now }\eqref{valuations eq1} {becomes $\gamma\tau+\sum_jk_{nj}[x_n',T_j]=0$. Now let $J_2=\langle x_1\rangle+\m^2_S$. Then in $\Omega_{S/J_2}$, we have again $\tau=0$, but $0\neq [x_n',T_j]=a_nx_n'dT_j-b_jT_jdx_n'=(a_n+b_j)x_n'dT_j$. Thus  we have $k_{nj}=0$ as well. We now have $\gamma\tau=0$ which implies $\gamma =0$. Thus $\tau,[x_i,T_j],\Gamma_{ij}$ are $\k$-linearly independent}.

		%
		%

		(2):  
		In $\Omega_{\overline{R}}\cong \k\ps{t}$ where $\overline{R}=\k\ps{t}$, we have 
		\begin{align}
			\frac{dx_n}{dt}\left(\frac{dx_1}{dt}\right)&-\frac{dx_1}{dt}\left(\frac{dx_n}{dt}\right)=0\nonumber\\
			\left(a_n\alpha_nt^{a_n-1}+t^{a_n}\frac{d(\alpha_n)}{dt}\right)\left(\frac{dx_1}{dt}\right)&-a_1t^{a_1-1}\left(\frac{dx_n}{dt}\right)=0\label{section 6 eq1}
		\end{align}
		Multiplying \eqref{section 6 eq1} by $\alpha_{n-1}t^{a_{n-1}-a_1+1}$, we get
		\begin{align}
			\left(a_n\alpha_n\alpha_{n-1}t^{a_n-1+a_{n-1}-a_1+1}+\alpha_{n-1}t^{a_n+a_{n-1}-a_1+1}\frac{d\alpha_n}{dt}\right)\frac{dx_1}{dt}&-a_1\alpha_{n-1}t^{a_1-1+a_{n-1}-a_1+1}\frac{dx_n}{dt}=0\nonumber\\
			\left(a_n\alpha_n\alpha_{n-1}t^{a_n+a_{n-1}-a_1}+\alpha_{n-1}t^{a_n+a_{n-1}-a_1+1}\frac{d\alpha_n}{dt}\right)\frac{dx_1}{dt}&-a_1\alpha_{n-1}t^{a_{n-1}}\frac{dx_n}{dt}=0\nonumber\\
			\left(a_n\alpha_n\alpha_{n-1}t^{a_n+a_{n-1}-a_1}+\alpha_{n-1}t^{a_n+a_{n-1}-a_1+1}\frac{d\alpha_n}{dt}\right)\frac{dx_1}{dt}&-a_1x_{n-1}\frac{dx_n}{dt}=0\label{section 6 eq2}
		\end{align}
		The order valuation of $u=a_n\alpha_n\alpha_{n-1}t^{a_n+a_{n-1}-a_1}+\alpha_{n-1}t^{a_n+a_{n-1}-a_1+1}\frac{d\alpha_n}{dt}$  is at least $a_n+a_{n-1}-a_1$. Since $a_n+a_{n-1}-a_1\geq c_R$, we get that $u\in \cC_R\subseteq R$. Thus \eqref{section 6 eq2}, as a member of $\Omega_R$, takes the form 
		\begin{align}\label{section 6 eq3}
			\begin{bmatrix}
				u\\0\\\vdots\\0\\a_1x_{n-1}
			\end{bmatrix}
		\end{align}
	 Since $\mathfrak{C}_R\subseteq\m_R^2$, consider the torsion element \eqref{section 6 eq3} in $\Omega_{R/\m_R^2}$. It takes the from $a_1x_{n-1}d(x_n)$ which is nonzero due to \cite[Proposition 2.6, Corollary 2.7]{ABC1}.
	\end{proof}
\begin{remark}\label{description of [x_i,T_j] with an bigger than c-a1}
	An immediate consequence of the above theorem is that, henceforth, we can always assume $a_n<c_R-a_1$. This in turn results in the simplification of the description of $[x_i,T_j]$ \eqref{description of [x_i,T_j] in S/m2} in $\Omega_{S/\m_S^2}$. First, consider the description of $f_{ij}(x_1,\dots,x_n)$ in \eqref{f_i's in [x_i,T_j]}. The valuations of the terms in $f_{ij}(x_1,\dots,x_n)$ are more than that of $T_j$, or more than $c_R-a_1$. Since $a_n<c_R-a_1$, no linear terms can appear in $f_{ij}(x_1,\dots,x_n)$. Thus $f_{ij}(x_1,\dots,x_n)\in\m_R^2$ and hence $\beta_{ijk}=0$. Thus  \eqref{description of [x_i,T_j] in S/m2} takes the form 
	\begin{align}
\begin{bmatrix}\label{simpler description of [x_i,T_j]}
			0\\\vdots \\0\\(b_j-a_i)T_j+\sum\limits_{k>j}^s\delta_{ik}(b_j-a_i) T_k\\0\\\vdots\\0
		\end{bmatrix}\hspace{-1em}
  \begin{array}{c;{2pt/2pt}c}
	~&~\\ ~&~\\ ~&~\\ ~&~\\ ~&~\\ ~&~\\ ~&~\\ ~&~\\ ~&~\\ ~&~
\end{array}\hspace{-1em}
\overbrace{
\begin{array}{c}
	dx_1 \\ \vdots \\ dx_{i-1}\\dx_i\\dx_{i+1} \\ \vdots \\ dT_s 
\end{array}
}^{\text{basis}}
	\end{align}
in $\Omega_{S/\m_S^2}$. This simpler form will help us to prove $\k$-linear independence of the torsion elements.
\end{remark}
	\begin{theorem}\label{an+an-1 is bigger than c}
		Suppose the valuations $a_1,\dots,a_n$ of $x_1,\dots,x_n$ of $R$ satisfy $a_n+a_{n-1}\geq c_R$, then the torsion $\tau(\Omega_R)$ is nonzero.
	\end{theorem}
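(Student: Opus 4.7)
The plan is to construct a single explicit torsion element $\tau \in \Omega_R$ directly, in the spirit of \Cref{prop on two valuations}(2) but starting from a different base element chosen to exploit the weaker hypothesis $a_n + a_{n-1} \geq c_R$. As usual, I may assume $\mathfrak{C}_R \subseteq \m_R^2$, as otherwise the conclusion is immediate from \cite[Theorem 3.1]{huneke2021torsion}.

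I take as candidate
\[\tau_0 := a_n\, x_n\, dx_{n-1} - a_{n-1}\, x_{n-1}\, dx_n \in \Omega_R.\]
Substituting $x_j = \alpha_j t^{a_j}$ and expanding, the leading contributions of order $a_n + a_{n-1} - 1$ cancel (this is the crucial cancellation that makes $\tau_0$ a good base candidate), and the image of $\tau_0$ in $\Omega_{\overline{R}} \cong \overline{R}$ under $dt \mapsto 1$ takes the form
\[u \cdot t^{a_n + a_{n-1}}, \qquad u := a_n\, \alpha_n\, \alpha_{n-1}' - a_{n-1}\, \alpha_n'\, \alpha_{n-1} \in \overline{R}.\]

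The hypothesis $a_n + a_{n-1} \geq c_R$ now enables the key correction step: the formal antiderivative
\[h_0 := \int u\, t^{a_n + a_{n-1}}\, dt \in \overline{R}\]
has valuation at least $a_n + a_{n-1} + 1 > c_R$, so $h_0 \in \mathfrak{C}_R \subseteq R$ (the integration is legal since $\operatorname{char}\k = 0$, so no denominator vanishes). By the chain rule, the image of $dh_0$ in $\Omega_{\overline R}$ is also $u \cdot t^{a_n + a_{n-1}}$, and hence
\[\tau := \tau_0 - dh_0 \in \ker\bigl(\Omega_R \to \Omega_{\overline R}\bigr) = \tau(\Omega_R).\]

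To verify $\tau \neq 0$, I would pass to $\Omega_{R/\m_R^2}$. Since $h_0 \in \mathfrak{C}_R \subseteq \m_R^2$, we get $dh_0 \equiv 0$ there. The relation $x_n x_{n-1} \in \m_R^2$ forces $x_n\, dx_{n-1} + x_{n-1}\, dx_n \equiv 0$ in $\Omega_{R/\m_R^2}$, so
\[\tau \equiv (a_n + a_{n-1})\, x_n\, dx_{n-1} \quad \text{in } \Omega_{R/\m_R^2},\]
which is nonzero by \cite[Proposition 2.6]{ABC1} since $a_n + a_{n-1} \neq 0$ in $\k$. Thus $\tau \neq 0$ in $\Omega_R$. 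The main subtlety is the correction step: the hypothesis $a_n + a_{n-1} \geq c_R$ is precisely what guarantees that the formal antiderivative lies in the conductor, and without it the lift from $\Omega_{\overline R}$ back into $\Omega_R$ cannot be performed.
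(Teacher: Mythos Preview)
Your proof is correct and is genuinely more elementary than the paper's argument. The paper proceeds by passing to the overring $S=R[\mathfrak{C}_R/x_1]$, building a torsion element there via the analogue of \Cref{prop on two valuations}(2) (using that $a_{n-1}+a_n\geq c_S+a_1$), verifying $\k$-linear independence from the $ns+\binom{s}{2}$ elements $[x_i,T_j]$ and $\Gamma_{ij}$ of \Cref{totalcount}, and then invoking the pull-back machinery of \cite[Theorem~4.9]{huneke2021torsion} to descend a nonzero torsion element to $\Omega_R$. You bypass $S$ entirely: your key observation is that the image of $\tau_0=a_nx_n\,dx_{n-1}-a_{n-1}x_{n-1}\,dx_n$ in $\Omega_{\overline R}$ already has valuation $\geq a_n+a_{n-1}\geq c_R$, so its formal antiderivative $h_0$ lies in $\mathfrak{C}_R\subseteq R$ and the exact correction $\tau=\tau_0-dh_0$ is torsion in $\Omega_R$ itself. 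The nonvanishing check in $\Omega_{R/\m_R^2}$ is then immediate from $h_0\in\m_R^2$ and \cite[Proposition~2.6]{ABC1}.

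What the paper's route buys is reusability: the $S$-and-pullback framework is what drives the harder \Cref{m^n in C}, where no such clean antiderivative correction is available inside $R$. What your route buys is a self-contained two-line argument for this theorem that needs none of \Cref{explicit Sec}, no bookkeeping of $\k$-linear independence, and no case split on whether $x_n\in\mathfrak{C}_S$. The antiderivative trick is a nice addition to the toolbox; it would be worth noting explicitly that it applies whenever one can produce an element of $\Omega_R$ whose image in $\Omega_{\overline R}$ lands in $\mathfrak{C}_R\,dt$ and which survives in $\Omega_{R/\m_R^2}$.
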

	\begin{proof}  Throughout the proof, we can assume that $\mathfrak{C}_R\subseteq \m_R^2$ as otherwise the conclusion follows from \cite[Theorem 3.1]{huneke2021torsion}.
		
		Consider $S=R\left[\frac{\mathfrak{C}_R}{x_1}\right] $ with conductor $\mathfrak{C}_S=(t^{c_R-a_1})\overline{R}$. If $x_n\in \mathfrak{C}_s$, then $a_n\geq c_S=c_R-a_1$ and hence we are done by \Cref{prop on two valuations}(1). 

		So it is enough to assume $x_n\not\in\mathfrak{C}_S$. Since $a_{n-1}+a_n\geq c_S+a_1=(c_R-a_1)+a_1$, we use the same steps as in \Cref{prop on two valuations}(2) and end up at the torsion element  \eqref{section 6 eq3}, now in $\Omega_S$ (we put zeros in the last $s$ rows corresponding to $dT_1,\dots, dT_s$). This torsion element is nonzero as it is nonzero in $\Omega_{S/(\mathfrak{C}_S+\m_S^2)}$.

		Let $\tau$ denote this torsion element. 
		Next we show that $\tau, \mathfrak{r}_{ij}=[x_i,T_j], {\Gamma_{ij}=[T_i,T_j]}$ (as in \Cref{explicit Sec}) are $\k$-linearly independent. In the description of $\tau$, let $u=f_u(x_1,\dots,x_n)+\sum\limits_{k=1}^s\gamma_k T_k, \gamma_k\in\k$. Since $x_n\not\in \mathfrak{C}_S$ and $u\in \mathfrak{C}_S$, we have $f_u(x_1,\dots,x_n)\in\m_R^2$. Thus $\tau$ in $\Omega_{S/\m_S^2}$, will take the form
		\begin{align*}
			\tau=\begin{bmatrix}
				\sum\limits_{k=1}^s\gamma_k T_k\\
				0\\\vdots\\0\\ a_1x_{n-1}\\0\\\vdots\\0
			\end{bmatrix}\hspace{-1em}
			\begin{array}{c;{2pt/2pt}c}
				~&~\\ ~&~\\ ~&~\\ ~&~\\ ~&~\\ ~&~\\ ~&~\\~&~\\~&~\\~&~\\
			\end{array}\hspace{-1em}
			\overbrace{
				\begin{array}{c}
					~\\
					dx_1 \\dx_2\\ \vdots\\dx_{n-1} \\ dx_n \\dT_1\\ \vdots \\ dT_s
				\end{array}
			}^{\text{basis}}.
		\end{align*}
		Let  $J=\m_S^2+\langle T_1,\dots,T_s\rangle$. In $\Omega_{S/J}$, the above torsion takes the from $a_1x_{n-1}dx_n$ which is nonzero by \cite[Proposition 2.6]{ABC1}. Now suppose $\tau=\sum_{i,j} k_{ij}[x_i,T_j]{+\sum_{i,j}k'_{ij}\Gamma_{ij}}$, $k_{ij},k'_{ij}\in\k$. In $\Omega_{S/J}$, $[x_i,T_j]={\Gamma_{ij}=0}$ (see \eqref{simpler description of [x_i,T_j]}) and this contradicts the fact that $\tau$ is nonzero in $\Omega_{S/J}$. Thus $\tau$ is $\k$-linearly independent with $[x_i,T_j],\Gamma_{ij}$. Now the result follows from \cite[Theorem 4.9]{huneke2021torsion}.
			\end{proof}

\begin{proposition}\label{monomialcase}\rm{(\textbf{Monomial Case})}\label{monomial case}
	Let $K=(m_1,\dots,m_s)$ be a monomial ideal minimally generated by monomials $m_i=X_1^{b_{i1}}\cdots X_n^{b_{in}}$ in $\k\ps{X_1,\dots,X_n}$ and $R'=\k\ps{X_1,\dots,X_n}/K$. Suppose $m=X_1^{b_1}\cdots X_n^{b_n}\not\in K$ where $b_u\geq 1$ for some $1\leq u\leq n$. Then $m d x_u=0$ in $\Omega_{R'}$ if and only if $X_u^{b_u+1}\in K$ or  $\frac{\partial(mX_u)}{\partial X_i}\in K$ for all $1\leq i\neq u\leq n$. Here $x_i$ denotes the image of $X_i$ in $R'$.
\end{proposition}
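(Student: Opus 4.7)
The plan is to translate $m\,dx_u=0$ in $\Omega_{R'}$ into an explicit membership statement in $K$ by using the Jacobian presentation of $\Omega_{R'}$. Namely, writing $P=\k\ps{X_1,\ldots,X_n}$, one has $m\,dx_u=0$ in $\Omega_{R'}$ if and only if there exists $h\in K$ such that $\partial h/\partial X_u-m\in K$ and $\partial h/\partial X_i\in K$ for every $i\neq u$. This equivalence holds because any $h\in K$ vanishes in $R'$, so $dh=\sum_i(\partial h/\partial X_i)\,dx_i=0$ in $\Omega_{R'}$; conversely every element of the image of the Jacobian map is of this form modulo $KP^n$, since $d(r\cdot m_j)\equiv r\,dm_j\pmod{KP^n}$ and the $m_j$ generate $K$.

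For the sufficient direction I would exhibit an explicit $h$ in each case. If $X_u^{b_u+1}\in K$, take $h=\frac{1}{b_u+1}mX_u$: since $X_u^{b_u+1}$ divides $mX_u$ we have $h\in K$, and $\partial h/\partial X_u=m$, while for each $i\neq u$ the partial $\partial h/\partial X_i$ is still divisible by $X_u^{b_u+1}$ and hence lies in $K$. If instead $\partial(mX_u)/\partial X_i\in K$ for every $i\neq u$, the same choice $h=\frac{1}{b_u+1}mX_u$ works: picking any $i\neq u$ with $b_i\geq 1$ gives $(mX_u)/X_i\in K$ and therefore $mX_u\in K$, and the required partial conditions are exactly the hypothesis (the degenerate case $m=X_u^{b_u}$ reduces to the first branch, since then $mX_u=X_u^{b_u+1}$).

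For the necessary direction, I would analyze a witness $h$ monomial-by-monomial. Writing $h=\sum_\mu c_\mu\mu$ as a $\k$-linear combination of monomials in $K$ (using that $K$ is a monomial ideal, so membership in $K$ is determined monomial by monomial), the coefficient of the non-$K$ monomial $m$ in $\partial h/\partial X_u$ can only come from $\mu=mX_u$ and equals $c_{mX_u}(b_u+1)$, because $\partial\mu/\partial X_u$ is a scalar multiple of the unique monomial $\mu/X_u$. Hence $mX_u\in K$ and $c_{mX_u}=1/(b_u+1)$. Now for each $j\neq u$ with $b_j\geq 1$, the $mX_u$-term contributes $\frac{b_j}{b_u+1}(mX_u)/X_j$ to $\partial h/\partial X_j$. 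Since $\mu\mapsto\mu/X_j$ is injective on monomials with positive $X_j$-exponent, no other $\mu$ in the support of $h$ can produce the monomial $(mX_u)/X_j$; hence $(mX_u)/X_j$ must itself lie in $K$, i.e., $\partial(mX_u)/\partial X_j\in K$. This yields condition (B), while condition (A) is the convenient sufficient alternative that already guarantees both $mX_u\in K$ and the partials conditions.

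The principal difficulty is the monomial-support analysis in the necessary direction, specifically ruling out any cancellation in $\partial h/\partial X_j$ between the non-$K$ contribution of $mX_u$ and contributions from other monomials in $\operatorname{supp}(h)$. The injectivity of the map $\mu\mapsto\mu/X_j$ on monomials with positive $X_j$-exponent is exactly what prevents such cancellation and forces the monomial $(mX_u)/X_j$ to actually lie in $K$.
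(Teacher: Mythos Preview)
Your argument is correct and, in the necessary direction, more direct than the paper's. Both proofs rest on the same Jacobian reformulation (finding $h\in K$ with $\partial h/\partial X_u\equiv m$ and $\partial h/\partial X_i\equiv 0\pmod K$ for $i\neq u$), and both handle the sufficient direction identically via $h=\tfrac{1}{b_u+1}mX_u$. For the necessary direction the paper argues by contradiction: assuming $X_u^{b_u+1}\notin K$ and $(mX_u)/X_i\notin K$ for some fixed $i\neq u$, it enlarges $K$ to the auxiliary monomial ideal $L=K+(X_u^{b_u+1}X_i^{b_i})$, passes to $\Omega_{P/L}$, determines the coefficient $r$ in the Jacobian expansion from the $\partial/\partial X_u$-equation, and then reads off from the $\partial/\partial X_i$-equation that $(mX_u)/X_i$ must lie in $L$, a contradiction. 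Your monomial-support analysis bypasses this construction: since $K$ is a monomial ideal, every monomial in $\operatorname{supp}(h)$ already lies in $K$, and the injectivity of $\mu\mapsto\mu/X_j$ on monomials with positive $X_j$-exponent pins $mX_u$ as the unique source of both $m$ in $\partial h/\partial X_u$ and $(mX_u)/X_j$ in $\partial h/\partial X_j$, so the latter is forced into $K$ with no possibility of cancellation. This is exactly what the paper's passage to $L$ is engineered to isolate, but you obtain it in one stroke; the paper's detour buys nothing extra.

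One small caveat: your handling of the degenerate case $m=X_u^{b_u}$ in the sufficient direction is not right---when $b_i=0$ for all $i\neq u$, condition (B) is vacuous and does \emph{not} reduce to condition (A). The paper's converse has the same unacknowledged gap (the sentence ``This implies $mX_u\in K$'' fails here). Your own necessary-direction argument actually proves the sharper equivalence $m\,dx_u=0\iff mX_u\in K$ together with $(mX_u)/X_j\in K$ for every $j\neq u$ with $b_j\ge 1$; condition (A) is then just a convenient sufficient criterion rather than a genuinely separate case.
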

\begin{proof}
	First, let $mdx_u=0$. Without loss of generality we assume that $u=1,i=2$.  Suppose by contradiction, $X_1^{b_1+1}\not\in K$ and  $\frac{mX_1}{X_2}\not \in K$. Here we replaced $\frac{\partial(mX_u)}{\partial X_i}=\frac{\partial(mX_1)}{\partial X_2}$ by $\frac{mX_1}{X_2}$.

	Consider the monomial ideal $L=(X_1^{b_1+1}X_2^{b_2})+K=(X_1^{b_1+1}X_2^{b_2},m_1,\dots m_s)$. First suppose that $m_i$, for some $1\leq i\leq s$, divides $X_1^{b_1+1}X_2^{b_2}$. Then $m_i=X_1^uX_2^v$ where $u\leq b_1+1,v\leq b_2$. If $u<b_1+1,v\leq b_2$, then $m\in (m_i)\subseteq K$, a contradiction. Now suppose $u=b_1+1,v<b_2$, then $\frac{mX_1}{X_2}\in (m_i)\subseteq K$, a contradiction.  Therefore, in these cases, none of the $m_i$'s divide $X_1^{b_1+1}X_2^{b_2}$. Finally if $u=b_1+1,v=b_2$, then $m_i=X_1^{b_1+1}X_2^{b_2}$ and hence $K=L$. Let $(X_1^{b_1+1}X_2^{b_2},m_1,\dots m_l), l\leq s$ be a minimal generating set for $L$. Also, clearly  $\frac{mX_1}{X_2}=X_1^{b_1+1}X_2^{b_2-1}X_2^{b_3}\cdots X_n^{b_n}\not\in L$. 

	Now since $mdx_1=0$ in $\Omega_{R'}$, we also have $mdx_1=0$ in $\Omega_{\k[[X_1,\dots,X_n]]/L}$. Thus
	\begin{align*}
		mdx_1=r d(x_1^{b_1+1}x_2^{b_2})+\sum_{i=1}^l r_idm_i, r,r_i\in {\k[[X_1,\dots,X_n]]/L}
	\end{align*}
	{(here we have again used $x_i$ to denote the images in the suitable quotient)}. Therefore,
	\begin{align}
		m&=r\frac{\pp(x_1^{b_1+1}x_2^{b_2})}{\pp x_1}+\sum_{i=1}^l r_i \frac{\pp m_i}{\pp x_1} \label{eq1}
		=r(b_1+1)x_1^{b_1}x_2^{b_2}+\sum_{i=1}^l r_ib_{i1}x_1^{b_{i1}-1}x_2^{b_{i2}}\cdots x_n^{b_{in}}.\\
		0&=r\frac{\pp(x_1^{b_1+1}x_2^{b_2})}{\pp x_k}+\sum_{i=1}^lr_i \frac{\pp m_i}{\pp x_k} \text{ for } 2\leq k\leq n.\label{eq2}
	\end{align}
	First we analyze \eqref{eq1}. Suppose $\frac{\pp m_i}{\pp x_1}$ divides $m$, then we have $b_{i1}-1\leq b_{1}$ and $b_{ik}\leq b_k$ for $2\leq k\leq n$. If $b_{i1}-1<b_1$, then $m\in K$ which leads to a contradiction. Therefore, $b_{i1}=b_1+1$. Now if $b_{i2}<b_2$, then $\frac{mX_1}{X_2}\in K$ which is again a contradiction. Thus $b_{i2}=b_2$. But now this forces $m_i\in (X_1^{b_1+1}X_2^{b_2})$ which is also contradiction as $(X_1^{b_1+1}X_2^{b_2},m_1,\dots, m_l)$ is a minimal generating set for $L$. Thus $\frac{\pp m_i}{\pp x_1}$ does not divide $m$. This shows that $m_i$'s have the property that either $b_{i1}>b_1+1$ or $b_{ik}>b_k$ for some $2\leq k\leq n$. 

	From the above discussion and \eqref{eq1}, we have $r=\frac{1}{b_1+1}x_3^{b_3}\cdots x_n^{b_n}$. Using $k=2$ in \eqref{eq2}, we have 
	\begin{align*}
		0&=r\frac{\partial(x_1^{b_1+1}x_2^{b_2})}{\partial x_2}+\sum_{i=1}^s r_i\frac{\partial m_i}{\partial x_2} \\
		&=rb_2x_1^{b_1+1}x_2^{b_2-1}+\sum_{i=1}^sr_i b_{i2}x_1^{b_{i1}}x_2^{b_{i2}-1}\cdots x_n^{b_{in}} \\
		&=\frac{b_2}{b_1+1}\frac{mx_1}{x_2}+\sum_{i=1}^sr_i b_{i2}x_1^{b_{i1}}x_2^{b_{i2}-1}\cdots x_n^{b_{in}}
	\end{align*}
	By our choice of $m_i$ (preceeding discussion on choice of $b_{ik}$), we have that the term $\frac{b_2}{b_1+1}\frac{mx_1}{x_2}$ cannot cancel with any term in the sum $\sum_{i=1}^sr_i b_{i2}x_1^{b_{i1}}x_2^{b_{i2}-1}\cdots x_n^{b_{in}}$. Thus $\frac{b_2}{b_1+1}\frac{mx_1}{x_2}=0$ or in other words, $\frac{b_2}{b_1+1}\frac{mX_1}{X_2}\in L$. Since $\frac{b_2}{b_1+1}\in\k$, this forces $\frac{mX_1}{X_2}\in L$, which leads to a contradiction. Thus we have either $X_1^{b_u+1}\in K$ or $\frac{\partial (mX_u)}{\partial X_i}\in K$.

	Conversely, suppose $\frac{\partial(mX_u)}{\partial X_i}\in K$ for all $1\leq i\neq u\leq n$. This implies $mX_u\in K$ and hence $d(mX_u)=0$ in $\Omega_{R'}$. Hence $\ds (b_u+1)mdx_u+\sum_{i=1,i\neq u}^nb_i\frac{mx_u}{x_i}dx_i\in KdR'+dK$. Using the assumption, we have $(b_u+1)mdx_u\in KdR'+dK$. Also, clearly,  if $X_u^{b_u+1}\in K$, then $mdx_u=0$, completing the proof. 
\end{proof}
In the following, we {use the notion of} monomial support of an ideal which is the same as $\ms(I)$ in \cite{PoliniUlrichVitulli07}. The ideal $\ms(I)$ is defined to be the smallest monomial ideal containing $I$ and is generated by the monomial support of a generating set for $I$.

\begin{example}
	If $I=(X^3-YZ,X^2Y-Z^2,XZ-Y^2)\subseteq S=\k\ps{X,Y,Z}$, then $\ms(I)=(X^3,YZ,X^2Y,Z^2,XZ,Y^2)$. Notice that $I$ is the defining ideal of $\k\ps{t^3,t^4,t^5}$. Clearly, we have $\tau=3xdz-5zdx\in \tau(\Omega_{S/I})$.  We can now use \Cref{monomialcase} with $K=\ms(I)$ and $mdx_u=xdz$ to see that $\tau$ is nonzero in $\Omega_{S/\ms(I)}$. Hence, it is nonzero in $\Omega_{S/I}$.
\end{example}

\begin{theorem}\label{m^n in C}
	Let $R=\k\ps{\alpha_1t^{a_1},\dots,\alpha_nt^{a_n}}\cong \k\ps{X_1,\dots,X_n}/I$ with conductor $\mathfrak{C}_R=(t^{c_R})\overline{R}$. Let $\tau(\Omega_R)$ denote the torsion submodule of $\Omega_R$ and $x_i$ denote the images of $X_i$ in $R$. {Let $N\geq 2$ be the  integer such that} $\m^N\subseteq \mathfrak{C}_R$ and suppose that $x_1^{N-1},x_1^{N-2}x_2\not\in\ms(I)$ where $I$ is the defining ideal of $R$. Then the torsion submodule $\tau(\Omega_R)$ is nonzero.
\end{theorem}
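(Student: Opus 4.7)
The plan is to exhibit a torsion by writing down a candidate that becomes a torsion after subtracting an exact form $df$, and to certify its nonvanishing in a carefully chosen monomial quotient via \Cref{monomialcase}.

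First consider $\tilde{\tau}_0 := a_2 x_1^{N-2} x_2\, dx_1 - a_1 x_1^{N-1}\, dx_2 \in \Omega_R$. Writing $D_i := dx_i/dt$, the lowest-order terms of $a_2 x_1^{N-2} x_2 D_1$ and $a_1 x_1^{N-1} D_2$ both equal $a_1 a_2\, \alpha_1^{N-1} \alpha_2\, t^{(N-1)a_1 + a_2 - 1}$ and cancel, so $v(\phi(\tilde{\tau}_0)) \geq V := (N-1)a_1 + a_2 \geq N a_1 \geq c_R$, placing $\phi(\tilde{\tau}_0) \in \mathfrak{C}_R$. To produce a genuine torsion I set $f_1 := \int x_1^{N-2} x_2 D_1\, dt \in \overline{R}$; integration by parts yields $(N-1)f_1 + f_2 = x_1^{N-1} x_2$ with $f_2 := \int x_1^{N-1} D_2\, dt$, so both $f_1, f_2$ lie in $\mathfrak{C}_R \subseteq R$. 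Define $f := (a_2 + a_1(N-1))\, f_1 - a_1 x_1^{N-1} x_2 \in R$; then $\phi(df) = \phi(\tilde{\tau}_0)$, so $\tau := \tilde{\tau}_0 - df$ is a torsion in $\Omega_R$. A direct manipulation rewrites $df$ as $(a_2 + a_1(N-1))\, dg$ where $g := f_1 - \tfrac{a_1}{V}\, x_1^{N-1} x_2$, and since the leading terms of $f_1$ and $\tfrac{a_1}{V}\, x_1^{N-1} x_2$ cancel, $v(g) \geq V + 1$.

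To show $\tau \neq 0$, I pass to $\Omega_{P/K}$ where $P = \k\ps{X_1,\ldots,X_n}$ and
\[
K \;:=\; \ms(I) + (X_3,\ldots,X_n) + (X_1^N,\, X_1^{N-1}X_2,\, X_2^2).
\]
A case analysis on monomials of $\ms(I)$ shows $\ms(I) \subseteq K$: a purely $(X_1, X_2)$-monomial of $\ms(I)$ must have $X_2$-exponent $\geq 2$ (hence in $(X_2^2)$), or $X_2$-exponent $1$ and $X_1$-exponent $\geq N-1$ (hence in $(X_1^{N-1}X_2)$), or $X_2$-exponent $0$ and $X_1$-exponent $\geq N$ (hence in $(X_1^N)$); every other shape would yield a multiple of $X_1^{N-1}$ or $X_1^{N-2} X_2$ in $\ms(I)$, against the hypothesis. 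The same analysis, together with $V + 1 > N a_1$, shows that every $P$-monomial of valuation $\geq V + 1$ lies in $K$; hence $g$ admits a lift in $K$ and $df = (a_2 + a_1(N-1))\, dg \equiv 0$ in $\Omega_{P/K}$. Thus in $\Omega_{P/K}$ the image of $\tau$ equals that of $\tilde{\tau}_0$, and using $X_1^{N-1} X_2 \in K$ (so $d(X_1^{N-1} X_2) \equiv 0$) this reduces to $(a_2 + a_1(N-1))\, X_1^{N-2} X_2\, dX_1$. Applying \Cref{monomialcase} with $m = X_1^{N-2} X_2$ and $u = 1$: neither $X_u^{b_u+1} = X_1^{N-1}$ nor $\partial(X_1^{N-1} X_2)/\partial X_2 = X_1^{N-1}$ lies in $K$ (since $X_1^{N-1} \notin \ms(I)$ and is not a multiple of any listed $K$-generator), so $X_1^{N-2} X_2\, dX_1 \neq 0$ in $\Omega_{P/K}$. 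Combined with $a_2 + a_1(N-1) > 0$, this forces $\tau \neq 0$ in $\Omega_{P/K}$ and a fortiori in $\Omega_R$.

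The main obstacle is the monomial inclusion statement that underpins both $\ms(I) \subseteq K$ and the assertion that every $P$-monomial of valuation $\geq V+1$ lies in $K$: the former uses the non-membership hypotheses $X_1^{N-1},\, X_1^{N-2} X_2 \notin \ms(I)$ in an essential way, while the latter uses the sharp inequality $V + 1 > N a_1$ to force the $X_1$-exponent of any surviving monomial into $(X_1^N)$. A secondary technical point is translating $v(g) \geq V+1$ into the statement that $g$ admits a lift in $K \subseteq P$; this requires controlling the representation of $g$ modulo $I$ and relies on the compatibility of the $v$-filtration on $R$ with monomial representations in the $x_i$'s—a standard feature of numerical-semigroup overrings, but one that needs to be invoked carefully.
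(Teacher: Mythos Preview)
Your approach is genuinely different from the paper's and, if completed, would be more direct: the paper splits off the case $x_1^{N-2}x_2\in\C_R$ in a separate lemma and otherwise passes to the auxiliary ring $S=R[\C_R/x_1]$, builds the analogous torsion there, proves it is $\k$-linearly independent from the $ns+\binom{s}{2}$ torsions of \Cref{totalcount}, and then invokes \Cref{thm on one more torsion} to pull something back to $\Omega_R$. You instead produce a torsion in $\Omega_R$ in one stroke by subtracting an exact form, which avoids both the case split and the $S$-machinery.

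However, the step ``hence $g$ admits a lift in $K$'' is a genuine gap, not merely a technicality. Knowing that every \emph{monomial} of valuation $\geq V+1$ lies in $K$ does \emph{not} imply that every element of $R$ with $v(\cdot)\geq V+1$ lifts into $K$: an arbitrary lift $G\in P$ of $g$ may contain monomials of valuation $\leq V$ that are not in $K$ (namely $X_1^{i}$ or $X_1^{i}X_2$ with small $i$), and there is no mechanism in your argument to trade these away. Your appeal to ``compatibility of the $v$-filtration with monomial representations'' is not a standard fact; it is exactly what would need to be proved, and it can fail in general when some $a_i$ coincides with a value like $(N-1)a_1$. The hypotheses $X_1^{N-1},X_1^{N-2}X_2\notin\ms(I)$ do rule out the most obvious obstructions, but turning that into a proof that $\pi(g)=0$ requires real work you have not supplied.

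Fortunately you do not need $\pi(g)=0$. In $\Omega_{P/K}$ one has $\tau\mapsto V\cdot X_1^{N-2}X_2\,dX_1 - d(\pi(f))$, so it suffices to show that $X_1^{N-2}X_2\,dX_1$ is \emph{not exact} in $\Omega_{P/K}$. Since $P/K\cong \k\ps{X_1,X_2}/(X_1^{N},X_1^{N-1}X_2,X_2^{2})$ (your case analysis on $\ms(I)$ shows precisely that the $\ms(I)$ summand becomes redundant modulo $(X_3,\dots,X_n)$), a direct computation of $d(P/K)$ shows that any exact form has its $X_1^{j}X_2\,dX_1$-components tied to nonzero $X_1^{j}\,dX_2$-components, whence $X_1^{N-2}X_2\,dX_1\notin\operatorname{im}(d)$. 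This closes the argument without ever needing to lift $g$.
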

We can always choose $N$ to be the least integer satisfying all the conditions in the hypothesis. For example, if $k$ is the least integer such that $\m^k\subseteq \C_R$ and $k\leq N$, then $x_1^{k-1},x_1^{k-2}x_2\not\in\ms(I)$ (as  $x_1^{n-1},x_1^{N-2}x_2\not\in\ms(I)$) as well. Thus, we will assume $N$ is least henceforth. The hypothesis now guarantees that $x_1^{N-1}\not\in\C_R$. Before we go to the proof of the theorem, we present a small lemma which helps us reduce to the case that $x_1^{N-2}x_2\not\in\C_R$.
\begin{lemma}\label{m^N in C lemma}
	Under the conditions of the above theorem, if $x_1^{N-2}x_2\in\C_R$, then the torsion $\tau(\Omega_R)$ is nonzero.
\end{lemma}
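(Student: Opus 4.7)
The plan is to translate the hypothesis into a valuation condition, handle the small-$N$ cases via previously proved theorems, and for $N \geq 4$ build an explicit torsion element in $\Omega_R$ whose nonvanishing is then verified via \Cref{monomial case}.

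After the monomialization that puts $x_1 = t^{a_1}$, the hypothesis $x_1^{N-2}x_2 \in \C_R$ is exactly the numerical inequality $(N-2)a_1 + a_2 \geq c_R$. For $N \in \{2,3\}$ this yields $a_1 + a_2 \geq c_R$ (strengthened to $a_2 \geq c_R$ when $N=2$), so $a_1 + a_n \geq a_1 + a_2 \geq c_R$, and \Cref{prop on two valuations}(1) gives $\tau(\Omega_R) \neq 0$ directly. Hence one may focus on $N \geq 4$.

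In that main case, I would consider
\[
\omega := x_1^{N-3}\bigl(a_2 x_2\, dx_1 - a_1 x_1\, dx_2\bigr) = a_2 x_1^{N-3} x_2\, dx_1 - a_1 x_1^{N-2}\, dx_2 \in \Omega_R.
\]
A direct computation using $\tfrac{dx_1}{dt} = a_1 t^{a_1-1}$ and $\tfrac{dx_2}{dt} = \alpha_2' t^{a_2} + a_2\alpha_2 t^{a_2-1}$ shows that
\[
f(\omega) = -a_1\, \alpha_2'\, t^{(N-2)a_1 + a_2} \in \Omega_{\overline R} \cong \overline R,
\]
and by the hypothesis this element has valuation at least $(N-2)a_1 + a_2 \geq c_R$, so it lies in $\C_R \subseteq R$. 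Writing $\alpha_2 = \sum_k \alpha_{2,k}\, t^k$ and integrating term-by-term produces
\[
g := -a_1 \sum_{k \geq 1} \frac{k\,\alpha_{2,k}}{(N-2)a_1 + a_2 + k}\, t^{(N-2)a_1 + a_2 + k},
\]
each summand of which is a $\k$-multiple of a $t$-power with valuation strictly greater than $c_R$ and therefore lies in $\C_R \subseteq R$. Thus $g \in R$ with $\tfrac{dg}{dt} = f(\omega)$, and $\tau := \omega - dg$ lies in $\ker f = \tau(\Omega_R)$.

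The main obstacle is to show $\tau \neq 0$ in $\Omega_R$. The plan is to reduce modulo a carefully chosen monomial ideal $K$ of $R$ containing $\ms(I)$ and apply \Cref{monomial case} to the monomial $m = x_1^{N-2}$ with respect to $x_2$. From $x_1^{N-2}x_2 \notin \ms(I)$ one immediately deduces $x_2, x_1^{N-3}x_2 \notin \ms(I)$ (as these divide a monomial outside the monomial ideal), which is precisely what \Cref{monomial case} requires to conclude $x_1^{N-2}\, dx_2 \neq 0$ in $\Omega_{R/\ms(I)}$. The delicate step is to enlarge $K$ beyond $\ms(I)$ just enough that $dg$ also vanishes in $\Omega_{R/K}$, using the key input that every $t$-power appearing in $g$ has valuation strictly greater than $c_R$, while not introducing monomials that would spoil the conclusion of \Cref{monomial case} for $x_1^{N-2}\, dx_2$. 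Balancing these two constraints on $K$ — controlling the monomial support of $g$ through the conductor while preserving the survival of the leading term — is the technically delicate heart of the argument.
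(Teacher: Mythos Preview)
Your construction of the torsion element $\tau=\omega-dg$ is correct: the integration step is clean and $g$ genuinely lies in $\C_R\subseteq R$, so $\tau\in\tau(\Omega_R)$. However, the nonvanishing argument has a real gap that you yourself flag but do not close.

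First, knowing $x_1^{N-2}\,dx_2\neq 0$ in $\Omega_{R/K}$ is not enough to conclude $\bar\tau\neq 0$: your $\tau$ has \emph{three} pieces, and even after $dg$ is killed you still have $a_2x_1^{N-3}x_2\,dx_1-a_1x_1^{N-2}\,dx_2$. These two summands can interact via relations $d(m)=0$ for $m\in K$; in particular the relation $d(x_1^{N-2}x_2)=0$ links them precisely. You must therefore decide whether $x_1^{N-2}x_2$ lies in $K$ and then argue accordingly (either combine the two terms into a single $(a_2+(N-2)a_1)x_1^{N-3}x_2\,dx_1$ and apply \Cref{monomial case} with $m=x_1^{N-3}x_2$, $u=1$, or prove independence of the two terms). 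Your sketch does neither. Note also that your intended application of \Cref{monomial case} with $m=x_1^{N-2}$ and $u=2$ has $b_u=0$, which violates the stated hypothesis $b_u\geq 1$.

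Second, you never construct $K$. Saying that the balance between killing $dg$ and preserving the leading term is ``the technically delicate heart of the argument'' is accurate, but it is exactly the part you omit. The difficulty is that $g$ is given as a $t$-series; to place $g$ in a monomial ideal of $\k\ps{X_1,\dots,X_n}$ you must control its expression in the $x_i$, and valuation alone does not do this without further argument.

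By contrast, the paper avoids the integration step entirely. The key observation you miss is that the hypothesis $x_1^{N-2}x_2\in\C_R$ already puts the \emph{monomial} $t^{(N-2)a_1+a_2}$ inside $R$, so $a_1x_1\,d\bigl(t^{(N-2)a_1+a_2}\bigr)-\bigl((N-2)a_1+a_2\bigr)t^{(N-2)a_1+a_2}\,dx_1$ is an honest torsion element of $\Omega_R$ from the start. Writing $t^{(N-2)a_1+a_2}=x_1^{N-2}x_2-f(\underline x)$ with $v(f)>(N-2)a_1+a_2$, one then passes to $\Omega_{R/J}$ for the explicitly given monomial ideal $J=\langle x_3,\dots,x_n\rangle+\ms\langle p\in R:v(p)>(N-2)a_1+a_2\rangle+\ms(I)$, checks that $x_1^{N-1},x_1^{N-2}x_2\notin J$, and uses the relation $d(x_1^{N-1}x_2)=0$ coming from $x_1^{N-1}x_2\in J$ to combine the two main terms into $-(a_2+(N-1)a_1)x_1^{N-2}x_2\,dx_1$. \Cref{monomial case} then applies with $m=x_1^{N-2}x_2$ and $u=1$ (so $b_u=N-2\geq 1$), with no need for a separate small-$N$ argument.
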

\begin{proof}
	Since $x_1^{N-2}x_2\in\C_R$, we have $t^{(N-2)a_1+a_2}\in \C_R$. Thus there exists $f(\underline{x})$ such that $t^{(N-2)a_1+a_2}=x_1^{N-2}x_2-f(\underline{x})$. Since the valuation $v(x_1^{N-2}x_2)=a_1(N-2)+a_2$,  the valuation of $f(\underline{x})$ is more than $(N-2)a_1+a_2$. Now we know that $a_1(t^{a_1})dt^{(N-2)a_1+a_2}-((N-2)a_1+a_2)t^{(N-2)a_1+a_2}dt^{a_1}=0$. Thus we have the torsion element
	\begin{multline*}
		 a_1x_1d(x_1^{N-2}x_2-f(\underline{x}))-\left((N-2)a_1+a_2)x_1^{N-2}x_2-f(\underline{x})\right)dx_1\\
		=a_1x_1\left((N-2)x_1^{N-3}x_2dx_1+x_1^{N-2}dx_2-\sum_i\frac{\partial f}{\partial x_i}dx_i \right)-\left((N-2)a_1+a_2)x_1^{N-2}x_2-f(\underline{x})\right)dx_1\\
		=(N-2)a_1x_1^{N-2}x_2dx_1+a_1x_1^{N-1}dx_2-\sum_ia_1x_1\frac{\partial f}{\partial x_i}dx_i-\left((N-2)a_1+a_2)x_1^{N-2}x_2-f(\underline{x})\right)dx_1\\
		\hfilneg  =\left(-a_2x_1^{N-2}x_2+f(\underline{x})\right)dx_1+a_1x_1^{N-1}dx_2-\sum_ia_1x_1\frac{\partial f}{\partial x_i}dx_i.	\hspace{10000pt minus 1fil}
	\end{multline*}
Let $J=\langle x_3,\dots,x_n\rangle+\ms\langle p\in R~|~v(p)>(N-2)a_1+a_2\rangle+\ms(I)$ (we treat the last part as an ideal in $R$; we keep writing $\ms(I)$ for notational convenience) where $I$ is the defining ideal of $R$. Note that $J$ is a monomial ideal not containing $x_1^{N-1},x_1^{N-2}x_2$ as these are not in $ \ms(I)$  and its valuations are less than or equal to $(N-2)a_1+a_2$. Also notice that $f(\underline{x})\in J$ as the valuation of $f(\underline{x})$ is more than $(N-2)a_1+a_2$. Now in $\Omega_{R/J}$,  {we show that} the above torsion element takes the form $	\left(-a_2x_1^{N-2}x_2\right)dx_1+a_1x_1^{N-1}dx_2$.  {First, } $dx_i=0$ for $i\geq 3$ and hence it is enough to show that  $\sum_{i=1}^2a_1x_1\frac{\partial f}{\partial x_i}dx_i=0$ in $\Omega_{R/J}$. 
	

Since $f(\underline{x})$ has valuation more than $(N-2)a_1+a_2$, $a_1x_1\frac{\partial f}{\partial x_1}$ is either $0$ or has valuation more than $a_1+(N-3)a_1+a_2=(N-2)a_1+a_2$. Thus it vanishes in $R/J$. So all that remains is $a_1x_1\frac{\partial f}{\partial x_2}$ which is either $0$ or has valuation more than $a_1+(N-2)a_1=(N-1)a_1=v(x_1^{N-1})$. Since $x_1^{N-1}\not \in \ms(I)$ and  $x_i\in J$ for $i\geq 3$, we need to only focus on the case when one of the  terms in the product $a_1x_1\frac{\partial f}{\partial x_2}$ is possibly $x_1^{N-2}x_2$.  This in turn shows that $f(\underline{x})$ has a term $\beta x_1^{N-3}x_2^2, \beta\in \k$. But the valuation shows that $x_1^{N-3}x_2^2\in J$. Hence $a_1x_1\frac{\partial f}{\partial x_1}dx_1+a_1x_1\frac{\partial f}{\partial x_2}dx_2=0+\beta a_1x_1d(x_1^{N-3}x_2^2)=0$ in $\Omega_{R/J}$, i.e., $\sum_ia_1x_1\frac{\partial f}{\partial x_i}dx_i=0$ in $\Omega_{R/J}$ as well. 

Now in $\Omega_{R/J}$, the torsion element $	\left(-a_2x_1^{N-2}x_2\right)dx_1+a_1x_1^{N-1}dx_2$ simplifies to $\omega=-(a_2+a_1(N-1))x_1^{N-2}x_2dx_1$. This is because $x_1^{N-1}x_2\in J$ and hence $x_1^{N-1}dx_2+(N-1)x_1^{N-2}x_2dx_1=0$ in $\Omega_{R/J}$. Now \Cref{monomial case} can be applied with $m=x_1^{N-2}x_2$ to see that $\omega$ is nonzero.
\end{proof}
\begin{proof}[Proof of \Cref{m^n in C}]  Due to the previous lemma, we assume $x_1^{N-1},x_1^{N-2}x_2\not\in\C_R$. Also, we can safely assume that we have ``monomialized'' $x_1$ so that $R=k\ps{t^{a_1},\alpha_2t^{a_2},\dots,\alpha_nt^{a_n}}$.

	Consider
	\begin{align*}
		\frac{dx_2}{dt}\left(	\frac{dx_1}{dt}\right)-	\frac{dx_1}{dt}\left(	\frac{dx_2}{dt}\right)=0\\
		(\alpha_2a_2t^{a_2-1}+t^{a_2}\frac{\partial \alpha_2}{\partial t})\left(	\frac{dx_1}{dt}\right)-a_1t^{a_1-1}\left(\frac{dx_2}{dt}\right)=0
	\end{align*}
	Multiply both sides by $t^{(N-2)a_1+1}$ to get
	\begin{align}
		(\alpha_2a_2t^{(N-2)a_1+a_2}+t^{(N-2)a_1+a_2+1}\frac{\partial \alpha_2}{\partial t})\left(	\frac{dx_1}{dt}\right)-a_1t^{(N-1)a_1}\left(\frac{dx_2}{dt}\right)=0\nonumber\\
		(a_2x_2x_1^{N-2}+t^{(N-2)a_1+a_2+1}\frac{\partial \alpha_2}{\partial t})\left(	\frac{dx_1}{dt}\right)-a_1x_1^{N-1}\left(\frac{dx_2}{dt}\right)=0\label{monomial theorem main eq1}
	\end{align}
{We } have $S=R[T_1,\dots,T_s]$ and $\mathfrak{C}_S=(t^{c-a_1})\overline{R}$. Since $(N-2)a_1+a_2\geq c-a_1$ ($\m^{N}\subseteq \mathfrak{C}_R$), we have that the above torsion element is a member of $\tau(\Omega_S)$.

	The above torsion element takes the form:
	\begin{align*}
		\tau=\begin{bmatrix}
			a_2x_2x_1^{N-2}+t^{(N-2)a_1+a_2+1}\frac{\partial \alpha_2}{\partial t}\\
			-a_1x_1^{N-1}\\
			0\\\vdots\\0
		\end{bmatrix}.
	\end{align*}
	For the defining ideal $L$ of $S$, let the monomial support be denoted by $\ms(L)$.

	We first show that $\tau$ is nonzero in $\Omega_S$. Let $J'=\langle x_3,\dots,x_n,T_1,\dots,T_s\rangle+\ms(L)+\ms(p\in S~|~v(p)>(N-2)a_1+a_2)$. It is a monomial ideal. Now consider $\tau$ in $\Omega_{S/J'}$. Since the valuation of $t^{(N-2)a_1+a_2+1}\frac{\partial \alpha_2}{\partial t}$ is more than $(N-2)a_1+a_2$, it will be of the form 
	\begin{align*}
		\overline{\tau}=\begin{bmatrix}
			a_2x_1^{N-2}x_2\\
			-a_1x_1^{N-1}\\
			0\\\vdots\\0
		\end{bmatrix}\in \Omega_{S/J'}.
	\end{align*}
	By hypothesis, $x_1^{N-2}x_2,x_1^{N-1}$ are not in the monomial support of the defining ideal of $R$. Now the rest of the defining ideal of $S$ is of the form $x_iT_j-g_{ij}(\underline{x}), T_iT_j-h_{ij}(\underline{x})$. The valuation of $x_i T_j$ is $a_i+b$ where $b\geq c_R-a_1$. Since $x_1^{N-1}\not\in \mathfrak{C}_R$, the valuation $c_R>(N-1)a_1$. Thus for $i\geq 2$, we have {$a_i+b\geq a_i+c_R-a_1>a_i+(N-2)a_1\geq a_2+(N-2)a_1=v(x_1^{N-2}x_2)>v(x_1^{N-1})$}. Thus the monomial support of $g_{ij}(\underline{x}), i\geq 2$ can never have $x_1^{N-2}x_2,x_1^{N-1}$ as terms. Also, the valuation of $x_1T_j$ is at least $c_R>(N-2)a_1+a_2$ (as $x_1^{N-2}x_2\not\in\C_R$). Thus $g_{1j}(\underline{x})$ cannot have $x_1^{N-2}x_2,x_1^{N-1}$ as terms as well. Applying similar arguments to analyze the elements $T_iT_j-h_{ij}(\underline{x})$, we conclude that $x_1^{N-2}x_2,x_1^{N-1}$ cannot appear in the monomial support of $L$. Thus these terms do not lie in $J'$ too.

	Since $x_1^{N-1}x_2\in J'$, we have $d(x_1^{N-1}x_2)=0$ in $\Omega_{S/J'}$. Thus we have $(N-1)x_1^{N-2}x_2dx_1=-x_1^{N-1}dx_2$. Using this equality in the above torsion element we have
	\begin{align}\label{monomialeq1}
		\overline{\tau}&=(a_2x_1^{N-2}x_2)dx_1-a_1x_1^{N-1}dx_2\nonumber\\
		&=(a_2x_1^{N-2}x_2)dx_1+a_1(N-1)x_1^{N-2}x_2dx_1\nonumber\\
		&=(a_2+a_1(N-1))x_1^{N-2}x_2dx_1.
	\end{align}
	Clearly $(a_2+a_1(N-1))\neq 0$, and $x_1^{N-2}x_2\not\in J'$. Using \Cref{monomial case}, we get that the above torsion element in nonzero in $\Omega_{S/J'}$ and hence $\tau$ is nonzero in $\Omega_{S}$.

	Next we show that $\tau$ is $\k$-linearly independent with the torsion elements $[x_i,T_j],{\Gamma_{ij}}$ (using the notations as in \Cref{explicit Sec}). Suppose \begin{align}\label{neweq}
	\tau=\sum_{i,j}k_{ij}[x_i,T_j]+\sum_{i,j}k'_{ij}\Gamma_{ij}
	\end{align} where $k_{ij},k'_{ij}\in\k$. 

	Let $J''=\langle x_1\rangle+\m_S^2$. Then in $\Omega_{S/J''}$, the torsion element $\tau $ \eqref{monomial theorem main eq1} is zero. Since $[x_1,T_j]=a_1x_1dT_j-b_jT_jdx_1$, this element is also zero in $\Omega_{S/J''}$. Thus in $\Omega_{S/J''}$, \Cref{neweq} is of the form $0=\sum_{i\geq 2,j\geq 1}k_{ij}\overline{[x_i,T_j]}+\sum_{i,j}k'_{ij}\overline{\Gamma_{ij}}$. Now consider the description of $\overline{[x_i,T_j]},i\geq 2,j\geq 1$  in \eqref{description of [x_i,T_j] with an bigger than c-a1}. Following the same proof as in \Cref{totalcount}, it is easy to see that $\overline{[x_i,T_j]},i\geq 2,j\geq 1$ {and the $\overline{\Gamma_{ij}}$ } are still $\k$-linearly independent in $\Omega_{S/J''}$.  It follows that $k_{ij}=0$ for $i\geq 2, j\geq 1$ {and also $k'_{ij}=0$ for $1\leq i,j\leq s$}. Thus we have {$\tau=\sum_{j\geq 1}k_{ij}[x_1,T_j]$ } in $\Omega_S$. Now consider the ideal $J'=\langle x_3,\dots,x_n,T_1,\dots,T_s\rangle+\ms(L)+\ms(p\in S~|~v(p)>(N-2)a_1+a_2)$ from the previous part of the proof. In $\Omega_{S/J'}$, $\tau$ is nonzero which we already proved above. But $\overline{[x_1,T_j]}=0$ in $\Omega_{S/J'}$ as $T_j\in J'$. Thus we arrive at $\tau=0$ in $\Omega_{S/J'}$, a contradiction. This proves the $\k$-linear independence of $\tau,[x_i,T_j],\Gamma_{ij}$. {Thus, $\lambda(\tau(\Omega_S))\geq ns+{s\choose 2}+1$.
	}

	Hence, a $\k$-linear combination of $\tau,[x_i,T_j],\Gamma_{ij}$ can be pulled back to a nonzero torsion element in $\Omega_R$ by \cite[Theorem 4.9]{huneke2021torsion}.
\end{proof}
The above theorem recovers \cite[Theorem 2.13]{ABC1}. 
\begin{corollary}{\cite[Theorem 2.13]{ABC1}} Under the conditions of \Cref{m^n in C}, if $\m^3\subseteq \C_R$, then the torsion $\tau(\Omega_R)$ is nonzero.
\end{corollary}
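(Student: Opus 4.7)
The plan is to invoke \Cref{m^n in C} with $N = 3$; since the hypothesis $\m_R^3 \subseteq \mathfrak{C}_R$ is given, I only need to verify the monomial conditions $x_1^2 \notin \ms(I)$ and $x_1 x_2 \notin \ms(I)$.

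I would handle $x_1^2 \notin \ms(I)$ by a direct valuation argument: if some $f \in I$ had $X_1^2$ in its support with coefficient $c$, the $t^{2 a_1}$-coefficient of $\Phi(f)$ would equal $c\,\alpha_{1,0}^2$, since $X_1^2$ is the only monomial of degree $\geq 2$ with image-valuation $\leq 2 a_1$ (because $\sum a_i \alpha_i \geq |\alpha|\,a_1 \geq 2 a_1$, with equality only at $\alpha = (2, 0, \ldots, 0)$). Since $\alpha_{1,0} \neq 0$ and $\Phi(f) = 0$, this forces $c = 0$.

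The main obstacle is verifying $x_1 x_2 \notin \ms(I)$. For this, I would first monomialize the uniformizer of $\overline{R}$ via Hensel's lemma (as in \Cref{Section:Setting}), producing $s$ with $x_1 = s^{a_1}$ and $\Phi(X_1^k) = s^{k a_1}$ exactly---a change of identification $\overline{R} \cong \k\ps{s}$ that leaves both $I$ and $\ms(I)$ unchanged. I would also use the fact that, since $x_1, \ldots, x_n$ is a minimal generating set of $\m_R$, the valuations $a_i$ coincide with the minimal generators of the numerical semigroup $v(R)$; in particular $a_2 \notin \langle a_1 \rangle$ (if $v(x_2) = k a_1$ for some $k \geq 2$, then replacing $x_2$ by $y := x_2 - c\,x_1^k$ for a suitable $c \in \k^\times$ yields a representative of $\bar{x_2} \in \m_R/\m_R^2$ with strictly larger valuation, and a basis-dimension argument in $\m_R/\m_R^2$ forces the iteration to terminate at a non-multiple of $a_1$).

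With these preparations, I would suppose $f \in I$ has $X_1 X_2$ in its support with coefficient $c$ and analyze the $s^{a_1 + a_2}$-coefficient of $\Phi(f)$. The monomials $X_1^k$ (for $k \geq 2$) contribute nothing, since $\Phi(X_1^k) = s^{k a_1}$ and $k a_1 \neq a_1 + a_2$ by the above. For any $X^\alpha$ involving some $X_j$ with $j \geq 2$, a short case analysis---using that $a_j > a_2$ for $j \geq 3$ and that $\alpha_j \geq 2$ for some $j \geq 2$ forces $\sum a_i \alpha_i \geq 2 a_2 > a_1 + a_2$---shows that only $\alpha = (1, 1, 0, \ldots, 0)$ can contribute to $s^{a_1 + a_2}$. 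Hence this coefficient reduces to $c\,\alpha_{2,0}' \neq 0$, which forces $c = 0$. Both monomial conditions now being established, \Cref{m^n in C} yields $\tau(\Omega_R) \neq 0$.
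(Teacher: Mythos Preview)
Your proof is correct and follows essentially the same route as the paper: invoke \Cref{m^n in C} with $N=3$ after verifying $X_1^2, X_1 X_2 \notin \ms(I)$ by a valuation/coefficient argument. Your treatment of the pure powers $X_1^k$ (via monomialization of $x_1$ and the semigroup reduction ensuring $a_2 \notin \langle a_1 \rangle$) is in fact more careful than the paper's brief case analysis, which glosses over this case.
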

\begin{proof}
Since $\m^3\subseteq \C_R$, we only need to check if $x_1^2,x_1x_2\not\in\ms(I)$ to use the above theorem. This is equivalent to checking if $X_1^2-f(\underline{X}),X_1X_2-g(\underline{X})\in I$ or $x_1^2=f(\underline{x}),x_1x_2=g(\underline{x})$ in $R$. 


We have $v(x_1^2)=2a_1,v(x_1x_2)=a_1+a_2$. Also, note that $v(f)$ or $v(g)$ is of the form $\sum_{i=1}^n j_ia_i, j_i\in\mathbb{N}$. Since $I\subseteq \m^2$, $f(\underline{X})$ and $g(\underline{X})$ cannot have any linear terms in $X_i$. 
Thus, either $j_i\geq 2$ for some $i\geq 2$ or there exist at least two values of $i$, say $\{i_1,i_2\}\neq \{1,2\}$, such that $j_i\geq 1$. Since we have $a_1<a_2<\cdots<a_n$, it is clear that the valuations $2a_1,a_1+a_2$ cannot be attained in either of these situations. 
Hence, $x_1^2,x_1x_2\not\in\ms(I)$.
\end{proof}

\begin{example}
	Let $R=\k\ps{t^{22},t^{23}+t^{27},t^{24}+t^{27},t^{25}+t^{27},t^{26}+t^{27}}\cong \k\ps{x,y,z,w,u}$. Here $\mathfrak{C}_R=(t^{110})\overline{R}$ and hence $\m^5\subseteq \mathfrak{C}_R$. M2 computations show that  $x^4,x^3y\not\in\ms(I)$. Thus the torsion submodule $\tau(\Omega_R)\neq 0$ by the above theorem.
\end{example}
\begin{example}
	Let $R=\k\ps{t^{30},t^{31}+t^{36},t^{32}+t^{36},t^{33}+t^{36},t^{34}+t^{36}}\cong \k\ps{x,y,z,w,u}$. Here $\mathfrak{C}_R=(t^{180})\overline{R}$ and hence $\m^6\subseteq \mathfrak{C}_R$. M2 computations show that   $x^5,x^4y\not\in\ms(I)$. Thus the torsion submodule $\tau(\Omega_R)\neq 0$ by the above theorem.
\end{example}

\end{document}